\tikzset{Mylong/.style={text width=3.1cm, align=center}, myarr/.style={->, double equal sign distance, -implies}}
\def\cal{\mathcal}
\newtheorem{theorem}{Theorem}[section]
\newtheorem{corollary}[theorem]{Corollary}
\newtheorem{lemma}[theorem]{Lemma}
\theoremstyle{definition}
\newtheorem{definition}[theorem]{Definition}
\newtheorem{example}[theorem]{Example}
\theoremstyle{remark}
\newtheorem{remark}[theorem]{Remark}
\title[$\text{\scalebox{.9}{Distributional chaos for weighted translation operators on groups}}$]
{Distributional chaos for weighted translation operators on groups}
\author[K-Y. Chen]{Kui-Yo Chen}
\subjclass[2010]{47A16, 43A15}
\keywords{distributional chaos, irregular vector, weighted translation, locally compact group}
\address{Department of Mathematics, National Taiwan University, Taipei 106, Taiwan}
\email{r04221001@ntu.edu.tw}
\thanks{The author was supported by grant MOST 111-2115-M-142-001- of National Science and Technology Council, Taiwan.}
\date{\today}
\begin{document}

\begin{abstract}
In this paper, we study distributional chaos for weighted translations on locally compact groups.
We give a sufficient condition for such operators to be distributionally chaotic and construct an example of distributionally chaotic weighted translations by way of the sufficient condition.
In particular, we prove the existence of distributional chaos and Li-Yorke chaos for weighted translations operators with aperiodic elements.

Furthermore, we also investigate the set of distributionally irregular vectors ($DIV$) of weighted translations through the cone and equivalence classes.
When the field is that of complex numbers, we uncover several properties on certain subsets of $DIV$, including their connectedness and correspondences with some measurable subsets in locally compact groups.
\end{abstract}

\maketitle
\addcontentsline{toc}{section}{Title}
\baselineskip17pt

\section{Introduction}
\label{sec:introduction}

In the past several decades, the study of linear dynamics has attracted a lot of attention.
At this present stage, there are some excellent books (for instance, see \cite{bmbook,gpbook}) on this topic.
Hypercyclicity and linear chaos play important roles in this investigation.
A linear operator $T$ on a separable Banach space $X$ is called {\it hypercyclic}
if there is a vector $x\in X$ such that the set $\{x, Tx, T^2x, \cdot\cdot\cdot\}$ is dense in $X$.
It is well known that hypercyclicity is equivalent to topological transitivity on separable Banach spaces.
If $T$ is hypercyclic together with the dense set of periodic points, then $T$ is said to be {\it Devaney chaotic}.

Devaney chaos is highly related to distributional chaos which was introduced by
Schweizer and Sm\'ital in \cite{ss94} and can be viewed as an extension of Li-Yorke chaos.
Recently, the notion of distributional chaos was considered for linear operators on
Banach spaces and Fr\'echet spaces in \cite{bbmp11,2013JFA,marko16,mop09,mop13,o06}.
Inspired by these, in this paper,
we initiate the investigation of distributional chaos for a class of specific linear operators,
namely, weighted translations on locally compact groups, and give some characterizations for their related properties.

The investigation of hypercyclicity and chaos on locally compact groups $G$ was first studied in \cite{cc11}.
Since then, some authors continue to tackle this theme.
Indeed, disjoint hypercyclicity of weighted translations on $L^p(G)$ was characterized in \cite{chen172}.
The existence of hypercyclic weighted translations on $L^p(G)$ was discussed in \cite{kuchen17}.
Also, Abakumov and Kuznetsova in \cite{ak17} focused on the density of translates in
the weighted Lebesgue space $L_w^p(G)$, where $w$ is a weight on $G$.
Recently, linear dynamics on Orlicz spaces $L_\Phi(G)$ and hypergroups are studied in \cite{aa17,cd18,ct18}
where $\Phi$ is a Young function.
However, distributional chaos on locally compact groups has not yet been considered in the literatures.
Hence, it is significant to study distributional chaos for weighted translations on groups.
We first recall the definition and some properties of distributional chaos for further discussions.

\subsection{Distributional chaos} 
\label{sec:distributional_chaos}

In \cite{ss94}, Schweizer and Sm\'ital introduced distributional chaos by the distributional function.
In \cite{2013JFA}, some equivalent conditions of distributional chaos were obtained for linear operators on Banach spaces.
\begin{theorem}{\rm(}\cite[Theorem 12]{2013JFA}{\rm)}
Let $T$ be a bounded linear operator on a Banach space $X$.
Then the following conditions are equivalent.
\begin{enumerate}
\item[{\rm(i)}] $T$ satisfies the Distributional Chaos Criterion (DCC);
\item[{\rm(ii)}] $T$ has a distributionally irregular vector;
\item[{\rm(iii)}] $T$ is distributionally chaotic;
\item[{\rm(iv)}] $T$ admits a distributionally chaotic pair.
\end{enumerate}
\end{theorem}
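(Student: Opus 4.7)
The plan is to establish the cycle (iii)~$\Rightarrow$~(iv)~$\Rightarrow$~(i)~$\Rightarrow$~(ii)~$\Rightarrow$~(iii); linearity of $T$ does most of the work in the easy implications, and a Baire category argument handles the hard one.

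The implication (iii)~$\Rightarrow$~(iv) is immediate, since a distributional $\varepsilon$-scrambled set contains a pair of distinct points, which is a distributionally chaotic pair by definition. For (iv)~$\Rightarrow$~(i), I would start from a distributionally chaotic pair $(x,y)$ and pass to $z := x - y$; linearity of $T$ converts the two density conditions on $\|T^n x - T^n y\|$ into $\overline{\mbox{dens}}\{n \in \mathbb{N} : \|T^n z\| < \tau\} = 1$ for every $\tau>0$ and $\underline{\mbox{dens}}\{n \in \mathbb{N} : \|T^n z\| < \varepsilon\} = 0$ for some $\varepsilon>0$. From these I would extract an increasing sequence $(n_k)$ of upper density $1$ with $T^{n_k} z \to 0$, and take the closed linear span of $\{T^k z : k \geq 0\}$ (with a standard perturbation argument supplying the second dense set) as the ingredients needed to satisfy the DCC.

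For (i)~$\Rightarrow$~(ii), I would use the DCC to run a Baire category argument on $X$. The collection of $x \in X$ whose orbit sends a set of upper density $1$ to $0$ is a $G_\delta$ set, as is the collection of $x$ whose orbit norms exceed any prescribed bound on a set of upper density $1$. The DCC is designed precisely so that both of these $G_\delta$ sets are dense, so their intersection is residual and in particular nonempty; every element of it is a distributionally irregular vector. This step is the principal obstacle, since one has to arrange the dense sets and perturbation sequences supplied by the DCC so that the two density-one conditions survive being imposed simultaneously on a single vector.

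Finally, for (ii)~$\Rightarrow$~(iii), given a distributionally irregular vector $x_0$, I would take $\Gamma := \{r x_0 : r \in [1,2]\}$, which is uncountable. For distinct $r,s \in [1,2]$, linearity yields $T^n(r x_0 - s x_0) = (r-s) T^n x_0$. Since $T^n x_0 \to 0$ along a set of upper density $1$, the same holds for $(r-s) T^n x_0$, so $F^*_{rx_0,\, sx_0}(\tau) = 1$ for every $\tau > 0$; since $\|T^n x_0\| \to \infty$ along a set of upper density $1$, the set $\{n : \|T^n x_0\| < \varepsilon/|r-s|\}$ has lower density $0$ for any fixed $\varepsilon > 0$, whence $F_{rx_0,\, sx_0}(\varepsilon) = 0$. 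Thus $\Gamma$ is an uncountable distributionally $\varepsilon$-scrambled set for every $\varepsilon > 0$, completing the cycle.
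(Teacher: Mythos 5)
A caveat before anything else: the paper does not prove this theorem. It is quoted from \cite[Theorem 12]{2013JFA} and used as a black box, so there is no ``paper's own proof'' to compare against; your proposal can only be measured against the argument in that reference.

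Your cycle (iii)$\Rightarrow$(iv)$\Rightarrow$(i)$\Rightarrow$(ii)$\Rightarrow$(iii) is exactly the cycle used in \cite{2013JFA}, and the two easy legs are correct and essentially complete: (iii)$\Rightarrow$(iv) is trivial, and (ii)$\Rightarrow$(iii) with $\Gamma=\{rx_0:r\in[1,2]\}$ works because for any fixed $c>0$ the set $\{n:\|T^nx_0\|<c\}$ has lower density $0$, so a single $\varepsilon$ serves all pairs. The two remaining legs, however, are only outlines, and each conceals the step that is the real content of the theorem. In (iv)$\Rightarrow$(i), setting $z=x-y$ gives you $\overline{\mbox{dens}}\{n:\|T^nz\|\geq\varepsilon\}=1$, but condition (ii) of the DCC demands a vector in $\overline{{\rm span}}\{x_k\}$ with $\|T^ny\|\to\infty$ along a set of positive upper density; a vector whose orbit merely stays above $\varepsilon$ on a density-one set yields no such $y$ directly, and no scalar multiple of $z$ helps. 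What rescues the step is the equivalence of (ii) with the weak form (ii$'$) recorded in the remark following Definition \ref{dcc}, i.e.\ \cite[Proposition 8]{2013JFA}, applied to $y_k=z/k$; that equivalence is a genuine sliding-hump construction, not a ``standard perturbation argument.'' Likewise, in (i)$\Rightarrow$(ii) your two $G_\delta$ descriptions are correct (after the gluing lemma that converts ``$\overline{\mbox{dens}}\{n:\|T^nx\|<\tau\}=1$ for every $\tau$'' into a single upper-density-one set along which $T^nx\to0$, and dually for unboundedness -- a lemma you invoke implicitly several times and should state), but the density of the ``distributionally unbounded'' $G_\delta$ in $\overline{{\rm span}}\{x_k\}$ is precisely \cite[Proposition 9]{2013JFA} and again rests on the same sliding-hump argument. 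In short: correct skeleton, the same route as the source, but the two propositions that carry the weight of the proof are named rather than proved.
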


To achieve our goal,
we will use the definitions of ${\rm(i)}$ the Distributional Chaos Criterion (DCC, for short) and ${\rm(ii)}$ distributionally irregular vectors,
which will be recalled in Section \ref{sec:sufficient_conditions} and Section \ref{sec:distributionally_irregular_vectors} respectively.

Note that distributionally chaoticity implies that Li-Yorke chaoticity (see \cite{bbmp11}).

For convenience, we define the upper and lower densities here.
Let $A\subset\mathbb{N}$. The upper and lower densities of $A$ are defined by
$$\overline{\mbox{dens}}(A)=\limsup_{n\rightarrow\infty}\frac{\mbox{card}(A\cap [1,n])}{n}\qquad
\mbox{and}\qquad \underline{\mbox{dens}}(A)=\liminf_{n\rightarrow\infty}\frac{\mbox{card}(A\cap [1,n])}{n},$$
respectively.


\subsection{Weighted translation} 
\label{sec:weighted_translation}

Throughout this paper, let $G$ be a second countable locally compact Hausdorff group with a right invariant Haar measure $\lambda$,
and denote by $L^p(G)\ (1\leq p <\infty)$ the complex Lebesgue space with respect to
$\lambda$.
A bounded continuous function $w:G \rightarrow (0,\infty)$ is
called a {\it weight} on $G$. Let $a \in G$.
A {\it weighted translation operator} $T_{a,w} : L^p(G) \longrightarrow L^p(G)$ is defined by
$$T_{a,w}(f) := wT_a(f) \qquad (f \in L^p(G))$$
where $w$ is a weight on $G$ and $T_a(f)(x) := f(xa^{-1})$.

If $w^{-1}\in L^\infty(G)$, then we denote the inverse of $T_{a,w}$ by $S_{a,w}$.
So $S_{a,w}=T_{a^{-1},(T_{a^{-1}}w)^{-1}}$, which is also a weighted translation.

Given a weight $w$ on $G$, define the operator $M_w:L^p(G)\rightarrow L^p(G)$
by
$$M_w(f)(x):=w(x)f(x),\qquad (f\in L^p(G),x\in G)$$
and let
$$
\varphi_{n}:=\prod_{j=1}^{n}T_{a^{-j}}w
\ \ \ \
\mbox{and}
\ \ \ \ \
{\widetilde \varphi_{n}}:=\left(\prod_{j=0}^{n-1}T_{a^j}w\right)^{-1}.
$$

Then
one can deduce $T^n_{a,w}=M_{{\widetilde \varphi_{n}}^{-1}}T_{a^n}=T_{a^n}M_{\varphi_n}$
for all $n\in\mathbb{N}$.

In what follows,
given a subset $Y$ of $G$,
we extend each function in $L^p(Y)$ to $G$
by setting the values to zero outside of $Y$.
Thus, we can identify $L^p(Y)$ as a subspace of $L^p(G)$.

In this paper, we will focus on weighted translation operators and give a sufficient condition (we call it (DCCW), see Definition \ref{DCCW}) for
such operators to be distributionally chaotic in Section \ref{sec:sufficient_conditions} by applying (DCC) (see Theorem \ref{DCCW_theorem}).
In particular,
this characterization ensures
the existence of distributionally chaotic and Li-Yorke chaotic weighted translation $T_{a,w}$ for certain group elements $a$ (see Theorem \ref{existence}).

As a result, it becomes meaningful to describe the structures and properties of the set of distributionally irregular vectors ($DIV$) for such operators (see Section \ref{sec:distributionally_irregular_vectors}).
We describe the connectivity of some substructures $DIV_{A,B}$ of $DIV$ and
claim that the closure of $DIV$ is a union of some special closed subspaces $L^p(M)$ for some $M$s
(see Theorem \ref{DIV_A_path_connected} and Theorem \ref{m_ab}).

\section{Sufficient conditions}
\label{sec:sufficient_conditions}

We first introduce our setting to begin this section.

In \cite{2013JFA}, the authors introduce the so called Distributional Chaos Criterion.
We recall and reformulate this criterion below.

\begin{definition}{\rm(}\cite[Definition 10]{2013JFA}\label{dcc}{\rm)}
Let $T$ be a bounded linear operator on a Banach space $X$. We say that $T$ satisfies the {\it Distributional Chaos Criterion} (DCC) if
\begin{enumerate}
\item[{\rm(i)}] there exist a sequence $(x_k)$ in $X$ and $A\subset \mathbb{N}$ with $\overline{{\rm dens}}(A)=1$
such that $\displaystyle\lim_{n\in A}T^n x_k=0$ for all $k$;
\item[{\rm(ii)}] there exist $y\in \overline{{\rm span}\{x_n: n\in\mathbb{N}\}}$ and $B\subset\mathbb{N}$ with
$\overline{{\rm dens}}(B)>0$ such that $\displaystyle\lim_{n\in B}\|T^ny\|=\infty$.
\end{enumerate}
\end{definition}

\begin{remark}
We note that condition (ii) of Definition \ref{dcc} in the original version is stated as follows:
\begin{enumerate}
\item[{\rm(ii')}] $y_k\in \overline{{\rm span}\{x_n: n\in\mathbb{N}\}}$, $\|y_k\|\rightarrow0$ and there exist $\varepsilon >0$ and an increasing
sequence $(N_k)$ in $\mathbb{N}$ such that
$$\mbox{card}\{1\le j\le N_k:\|T^j y_k\|>\varepsilon\}\geq \varepsilon N_k$$
for all $k\in \mathbb{N}$.
\end{enumerate}
However, conditions (ii) and (ii') are equivalent by setting $X=\overline{{\rm span}\{x_n: n\in\mathbb{N}\}}$ in \cite[Proposition 8]{2013JFA}.
\end{remark}

In this section, we will give a sufficient condition (DCCW) for weighted translations $T_{a,w}$ to be distributionally chaotic.

\begin{definition}\label{DCCW_temp}
Let $T_{a,w}$ be a weighted translation on $L^p(G)$.
We say that $T_{a,w}$ satisfies the property $\mathfrak{P}$
if there exists a sequence of compact sets $(K_n)$ in $G$
such that
\begin{enumerate}
\item[{\rm(i)}] there exists $A\subset \mathbb{N}$ with $\overline{{\rm dens}}(A)=1$ such that
$\displaystyle\lim_{n\in A}\|\varphi_n|_{K_k}\|_p=0$ for all $k$,
\item[{\rm(ii)}] there exists $B\subset \mathbb{N}$ with $\overline{{\rm dens}}(B)>0$ such that
$$\sum_{n\in B}\frac{\lambda(K_n)^{\frac{1}{p}}}{\|\varphi_n|_{K_n}\|_p}<\infty.$$
\end{enumerate}
\end{definition}


For Definition \ref{DCCW_temp}, there is some detail discussion between $\varphi_n$ and the compact subsets $K_n$.
\begin{lemma}\label{small_Kn}
Let $T_{a,w}$ be a weighted translation on $L^p(G)$.
Suppose that $T_{a,w}$ satisfies $\mathfrak{P}$ for a sequence of compact sets $(K_n)$.
Then $T_{a,w}$ also satisfies $\mathfrak{P}$ for some compact subsets $K_n'$ of $K_n$
with $\sup\limits_{K_n'}\varphi_n^p-\inf\limits_{K_n'}\varphi_n^p$
arbitrary small.
Explicitly, for any $\varepsilon_n > 0$, there exists compact subset $K_n'\subseteq K_n$ such that
$\sup\limits_{K_n'}\varphi_n^p-\inf\limits_{K_n'}\varphi_n^p < \varepsilon_n$ and $T_{a,w}$ also satisfies $\mathfrak{P}$ with the sequence of compact subsets $K_n'$.
\end{lemma}
\begin{proof}
We first prove that the condition (ii) of $\mathfrak{P}$ holds for $K_n'$.
Choose some $\alpha \in [\frac{1}{\lambda(K_n)}\int_{K_n}\varphi_n^p, \sup\limits_{K_n}\varphi_n^p)$
such that $\sup\limits_{K_n}\varphi_n^p - \alpha < \varepsilon_n$.
Set
$$K_n':=
\left\{x \in K_n : \varphi_n^p(x) \ge \frac{\alpha + \sup\limits_{K_n}\varphi_n^p}{2}\right\}
.$$
Then we have
$$\sum\limits_{n\in B} \frac{\lambda(K_n')^{\frac{1}{p}}}{\|\varphi_n|_{K_n'}\|_p}
= \sum\limits_{n\in B}\left(\frac{1}{\frac{1}{\lambda(K_n')}\int_{K_n'}\varphi_n^p}\right)^{\frac{1}{p}}
\le \sum\limits_{n\in B}\left(\frac{1}{\frac{1}{\lambda(K_n)}\int_{K_n}\varphi_n^p}\right)^{\frac{1}{p}}
=\sum\limits_{n\in B}\frac{\lambda(K_n)^{\frac{1}{p}}}{\|\varphi_n|_{K_n}\|_p}
<\infty$$
together with $\sup\limits_{K_n'}\varphi_n^p-\inf\limits_{K_n'}\varphi_n^p < \varepsilon_n$.

Concerning the condition (i) of $\mathfrak{P}$, one always has
$$\lim\limits _{n\in A} \|T_{a,w}^n\chi_{K_k'}\|_p \le \lim\limits _{n\in A} \|T_{a,w}^n\chi_{K_k}\|_p=0$$
if $K_k'\subseteq K_k$.

Thus, $T_{a,w}$ also satisfies $\mathfrak{P}$ with the sequence of compact subsets $K_n'$.
\end{proof}

\begin{definition}\label{DCCW}
(DCCW)
Let $T_{a,w}$ be a weighted translation on $L^p(G)$. If there exists a sequence $g_n\in G$ such that
\begin{enumerate}
\item[{\rm(i)}] there exist $A\subset \mathbb{N}$ with $\overline{{\rm dens}}(A)=1$ and
a sequence of compact neighborhood $K_n$ of $g_n$
such that
$\displaystyle\lim_{n\in A}\|\varphi_n|_{K_k}\|_p=0$ for all $k$,
\item[{\rm(ii)}] there exists $B\subset \mathbb{N}$ with $\overline{{\rm dens}}(B)>0$ such that
$$\sum\limits_{n\in B} \frac{1}{\varphi_n(g_n)}<\infty,$$
\end{enumerate}
then we say that $T_{a,w}$ satisfies the {\it Distributional Chaos Criterion for Weighted Translations} (DCCW, for short).
\end{definition}

In this section, we will prove the following diagram.
\vskip1em
\hskip7.5em
\begin{tikzpicture}
\node (conI) {(DCCW)};
\node (conII) [right = of conI] {Property $\mathfrak{P}$};
\node (conIII) [right = of conII] {(DCC)};

\draw [myarr] (conI) -- (conII);
\draw [myarr] (conII) -- (conIII);
\end{tikzpicture}
\vskip1em
For the first implication, see Theorem \ref{DCCW2P}.
For the second implication, see Theorem \ref{DCCW_theorem}.



\begin{lemma}\label{equi_(ii)}
Let $T_{a,w}$ be a weighted translation on $L^p(G)$.
Suppose there exists $B\subset \mathbb{N}$ with $\overline{{\rm dens}}(B)>0$ such that
$\sum\limits_{n\in B} \frac{1}{\varphi_n(g_n)}<\infty$ for some $g_n\in G$.
Then for any given sequence of $V_n$ with $V_n$ being of neighborhood of $g_n$,
there exists a sequence of compact neighborhoods $K_n\subseteq V_n$ in $G$ such that
$$\sum_{n\in B}\frac{\lambda(K_n)^{\frac{1}{p}}}{\|\varphi_n|_{K_n}\|_p}<\infty.$$
\end{lemma}
\begin{proof}

Since $\varphi_n^p$ is continuous at $g_n$,
we can find compact neighborhoods $K_n$ small enough in $V_n$ such that
$$\Bigg|\left(\frac{1}{\varphi_n^p(g_n)}\right)^{\frac{1}{p}}
-\left(\frac{1}{\frac{1}{\lambda(K_n)}\int_{K_n}\varphi_n^p}\right)^{\frac{1}{p}}\Bigg|<\frac{1}{2^n}.$$

Thus
\[\sum\limits_{n\in B}\frac{\lambda(K_n)^{\frac{1}{p}}}{\|\varphi_n|_{K_n}\|_p}
=\sum\limits_{n\in B}\left(\frac{1}{\frac{1}{\lambda(K_n)}\int_{K_n}\varphi_n^p}\right)^{\frac{1}{p}}
\le
\sum\limits_{n\in B} \frac{1}{\varphi_n(g_n)} + 1
<\infty.\]
\end{proof}

\begin{theorem}\label{DCCW2P}
Let $T_{a,w}$ be a weighted translation on $L^p(G)$.
If 
$T_{a,w}$ satisfies (DCCW),
then
$T_{a,w}$ satisfies the property $\mathfrak{P}$.
\end{theorem}
\begin{proof}


Suppose that $T_{a,w}$ satisfies (DCCW) with $K_n$ and $g_n$.
By Lemma \ref{equi_(ii)}, there exists a sequence of compact neighborhoods $K_n'\subseteq K_n$ of $g_n$ such that
the condition (ii) in $\mathfrak{P}$ is satisfied.
Then the sequence $K_n'$ satisfies both (i) and (ii) in $\mathfrak{P}$.
\end{proof}

So far, we have proven that (DCCW) implies the property $\mathfrak{P}$.
Next, we will prove that the implication of the property $\mathfrak{P}$ to (DCC) (Definition \ref{dcc}) holds. 
Therefore, we conclude that (DCCW) is a sufficient condition for distributional chaoticity.

\begin{lemma}\label{equi_DCCW}
Let $(K_n)$ be a sequence of compact sets in $G$ with positive measures. Then the following conditions are equivalent.
\begin{enumerate}
\item[{\rm(i)}]
There exists $B\subset \mathbb{N}$ with $\overline{{\rm dens}}(B)>0$ such that
$$\sum_{n\in B}\frac{\lambda(K_n)^{\frac{1}{p}}}{\|\varphi_n|_{K_n}\|_p}<\infty.$$
\item[{\rm(ii)}]
There exist a nonnegative sequence $(c_n)$ and $B\subset \mathbb{N}$ with $\overline{{\rm dens}}(B)>0$ such that
$\displaystyle\lim_{n\in B}c_n\|\varphi_n|_{K_n}\|_p=\infty$ and $\displaystyle\sum_{n\in B}c_n\lambda(K_n)^{\frac{1}{p}}<\infty$.
\end{enumerate}
\end{lemma}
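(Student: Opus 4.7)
The plan is to prove the two implications separately, treating the compactness and weight structure as inessential: at bottom this is a statement about the positive sequence $a_n:=\lambda(K_n)^{1/p}/\|\varphi_n|_{K_n}\|_p$, and I will use the same set $B$ on both sides so that the density hypothesis $\overline{{\rm dens}}(B)>0$ transfers for free.

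For the easy direction (ii)$\Rightarrow$(i), since $c_n\|\varphi_n|_{K_n}\|_p\to\infty$ along $n\in B$, one has $c_n\|\varphi_n|_{K_n}\|_p\ge 1$ for all sufficiently large $n\in B$, which rearranges to $\lambda(K_n)^{1/p}/\|\varphi_n|_{K_n}\|_p\le c_n\lambda(K_n)^{1/p}$. Summing over $n\in B$ (the finitely many exceptional small-$n$ terms contribute only a finite amount) and using the summability hypothesis of (ii) yields (i) with the same $B$.

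For (i)$\Rightarrow$(ii), I would invoke the classical amplification lemma: if $\sum a_n<\infty$ with $a_n\ge 0$, then there exists a positive sequence $b_n\to\infty$ with $\sum a_n b_n<\infty$. Concretely, set $S_n:=\sum_{k\ge n,\,k\in B}a_k$, so $S_n\downarrow 0$, and take $b_n:=1/\sqrt{S_n}$; the AM-GM-style telescoping estimate
$$\frac{S_n-S_{n+1}}{\sqrt{S_n}}\le 2\bigl(\sqrt{S_n}-\sqrt{S_{n+1}}\bigr)$$
yields $\sum_{n\in B}a_n b_n\le 2\sqrt{S_1}<\infty$. Finally, define $c_n:=b_n/\|\varphi_n|_{K_n}\|_p$ for $n\in B$ and $c_n:=0$ otherwise. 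Then $c_n\|\varphi_n|_{K_n}\|_p=b_n\to\infty$ along $B$, while $c_n\lambda(K_n)^{1/p}=a_n b_n$ gives a summable series, establishing (ii). The main (and essentially only) obstacle is recalling and verifying the amplification lemma; once $(b_n)$ is in hand, the rescaling producing $(c_n)$ is automatic, and the $L^p$-group structure plays no further role beyond ensuring that the relevant quantities are well-defined and strictly positive.
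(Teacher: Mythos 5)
Your proof is correct and follows essentially the same route as the paper: for (i)$\Rightarrow$(ii) the paper defines exactly your $c_n=1/(\sqrt{r_n}\,\|\varphi_n|_{K_n}\|_p)$ with $r_n$ the tail sums and uses the identical telescoping bound $a_n\le 2\sqrt{r_n}(\sqrt{r_n}-\sqrt{r_{n+1}})$ to get $\sum c_n\lambda(K_n)^{1/p}\le 2\sqrt{r_1}$. The paper omits (ii)$\Rightarrow$(i) as trivial; your one-line argument for it is fine.
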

\begin{proof}
We will only prove the non-trivial implication ${\rm(i)} \Rightarrow {\rm(ii)}$.
Let
$$a_n=\left\{\begin{matrix}
& \frac{\lambda(K_n)^{\frac{1}{p}}}{\|\varphi_n|_{K_n}\|_p}
&\mbox{ if } n\in B; \\
& 0
&\mbox{ if } n\not\in B,
\end{matrix}\right.$$
$r_n=\sum\limits_{i\geq n}a_i$ and
$$c_n=\left\{\begin{matrix}
& \frac{1}{\sqrt{r_n}\|\varphi_n|_{K_n}\|_p}
&\mbox{ if } n\in B; \\
& 0
&\mbox{ if } n\not\in B. \\
\end{matrix}\right.$$
Then $\displaystyle\lim_{n\in B}c_n\|\varphi_n|_{K_n}\|_p=\infty$. Moreover,
by
$$
\frac{a_n}{\sqrt{r_n}}
=
\frac{(\sqrt{r_n}+\sqrt{r_{n+1}})(\sqrt{r_n}-\sqrt{r_{n+1}})}{\sqrt{r_n}}
\le
\frac{2\sqrt{r_n}(\sqrt{r_n}-\sqrt{r_{n+1}})}{\sqrt{r_n}}
=
2(\sqrt{r_n}-\sqrt{r_{n+1}})
,$$
we have $\sum\limits_{n\in \mathbb{N}}\frac{a_n}{\sqrt{r_n}} \leq 2\sqrt{r_1}$.
Thus, 
\begin{eqnarray*}
\sum_{n\in B}c_n\lambda(K_n)^{\frac{1}{p}}
&=&
\sum_{n\in B}\frac{a_n}{\sqrt{r_n}}
=\sum_{n\in \mathbb{N}}\frac{a_n}{\sqrt{r_n}}
\leq 2\sqrt{r_1}\\
&=&
2\sqrt{\sum_{n\in \mathbb{N}}a_n}
=2\sqrt{\sum_{n\in B}a_n}
=2\sqrt{\sum_{n\in B}\frac{\lambda(K_n)^{\frac{1}{p}}}{\|\varphi_n|_{K_n}\|_p}}
<\infty.
\end{eqnarray*}
\end{proof}

\begin{theorem}\label{DCCW_theorem}
Let $T_{a,w}$ be a weighted translation on $L^p(G)$ satisfying (DCCW).
Then it is distributionally chaotic.
\end{theorem}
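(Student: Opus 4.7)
The plan is to reduce (DCCW) to (DCC) and then invoke the cited equivalence theorem from \cite{2013JFA}. The crucial structural fact I will use throughout is that the translation $T_{a^n}$ is an isometry of $L^p(G)$ (since $\lambda$ is right invariant), so the identity $T_{a,w}^n=T_{a^n}M_{\varphi_n}$ gives
$$\|T_{a,w}^n f\|_p=\|\varphi_n f\|_p\qquad (f\in L^p(G),\ n\in\mathbb{N}).$$
In particular, norms of iterates of $T_{a,w}$ on a function supported in a set $E$ are controlled by $\|\varphi_n|_E\|_p$, which is exactly the quantity appearing in (DCCW).

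For clause (i) of (DCC), I would simply take $x_k:=\chi_{K_k}$. Then $\|T_{a,w}^n x_k\|_p=\|\varphi_n|_{K_k}\|_p$, so condition (i) of (DCCW) hands us the required set $A$ with $\overline{\mathrm{dens}}(A)=1$ and $\lim_{n\in A}\|T_{a,w}^n x_k\|_p=0$ for every $k$.

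For clause (ii) of (DCC), I would first feed (ii) of (DCCW) into Lemma \ref{equi_DCCW} to produce a nonnegative sequence $(c_n)$ vanishing off $B$ with $\overline{\mathrm{dens}}(B)>0$, with $c_n\|\varphi_n|_{K_n}\|_p\to\infty$ along $B$, and with $\sum_{n\in B}c_n\lambda(K_n)^{1/p}<\infty$. Then I would define
$$y:=\sum_{n\in B}c_n\chi_{K_n}.$$
Convergence of this series in $L^p(G)$ is immediate from the triangle inequality and $\sum_{n\in B}c_n\lambda(K_n)^{1/p}<\infty$; moreover each partial sum lies in $\mathrm{span}\{x_k:k\in\mathbb{N}\}$, so $y\in\overline{\mathrm{span}\{x_k:k\in\mathbb{N}\}}$ as required. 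To get the blow-up along $B$, I note that $w>0$ implies $\varphi_n\ge 0$ and, pointwise, $y\ge c_n\chi_{K_n}$ for each $n\in B$. Combining this with the isometry identity above,
$$\|T_{a,w}^n y\|_p=\|\varphi_n y\|_p\ge \|c_n\,\varphi_n\chi_{K_n}\|_p=c_n\|\varphi_n|_{K_n}\|_p\xrightarrow[n\in B]{}\infty,$$
which is precisely condition (ii) of (DCC). Invoking the equivalence (i)$\Leftrightarrow$(iii) from \cite[Theorem 12]{2013JFA} then yields that $T_{a,w}$ is distributionally chaotic.

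I do not expect a genuine obstacle in this argument; the content of the theorem is essentially the observation that the rescaling step hidden in the original (DCC) (passing from $\|T^n y\|\to\infty$ to the "$\varepsilon$-lower bound on a positive density set" formulation) has already been carried out in the remark after Definition \ref{dcc}, and that Lemma \ref{equi_DCCW} converts the summability hypothesis of (DCCW) into coefficients $c_n$ that both produce the blow-up and keep $y$ in $L^p(G)$. The only place where care is needed is in verifying that $y$ actually belongs to the closed span of the $x_k=\chi_{K_k}$ and that the pointwise/positivity argument $\|\varphi_n y\|_p\ge c_n\|\varphi_n|_{K_n}\|_p$ is legitimate, both of which follow from the positivity of $w$ and the monotone structure of the construction.
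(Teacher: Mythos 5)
Your proposal is correct and follows essentially the same route as the paper's own proof: take $x_k=\chi_{K_k}$ for clause (i) of (DCC), then use Lemma \ref{equi_DCCW} to build $y=\sum_{n\in B}c_n\chi_{K_n}$ and exploit positivity of $w$ and the isometry $T_{a^n}$ to get $\|T_{a,w}^ny\|_p\ge c_n\|\varphi_n|_{K_n}\|_p\to\infty$ for clause (ii). The only difference is presentational: you make the isometry identity $\|T_{a,w}^nf\|_p=\|\varphi_nf\|_p$ explicit up front, where the paper uses it implicitly.
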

\begin{proof}
By Theorem \ref{DCCW2P}, $T_{a,w}$ satisfies $\mathfrak{P}$ with compact sets $K_n$.

We first show that the condition (i) of (DCC) (Definition \ref{dcc}) holds. Define $x_k:=\chi_{K_k}$. Then
$T_{a,w}^nx_k=T_{a^n} M_{\varphi_n}\chi_{K_k} = T_{a^n}(\varphi_n\chi_{K_k})$.
Hence one obtains $\lim\limits _{n\in A} T_{a,w}^n x_k = 0$ by the fact
$\lim\limits _{n\in A} \|T_{a^n}(\varphi_n|_{K_k})\|_p = \lim\limits _{n\in A} \|\varphi_n|_{K_k}\|_p = 0$.

Next, we prove that the condition (ii) of (DCC) is satisfied as well.
By Lemma \ref{equi_DCCW}, there exist a nonnegative sequence $(c_n)$ and $B\subset \mathbb{N}$ with
$\overline{{\rm dens}}(B)>0$ such that
$\displaystyle\lim_{n\in B}c_n\|\varphi_n|_{K_n}\|_p=\infty$, and $\displaystyle\sum_{n\in B}c_n\lambda(K_n)^{\frac{1}{p}}<\infty$.
Set
$$y:=\sum_{n\in B} c_n \chi_{K_n}.$$
Then
$$\|y\|_p=\Big\|\sum\limits_{n\in B} c_n \chi_{K_n}\Big\|_p\le \sum\limits_{n\in B} c_n \|\chi_{K_n}\|_p=\sum\limits_{n\in B} c_n \lambda(K_n)^{\frac{1}{p}}<\infty.$$
From the above inequality, the inclusion $y\in\overline{span\{x_n: n\in \mathbb{N}\}}$ readily follows.

On the other hand, if $n\in B$, then
\begin{align*}
T_{a,w}^n y&=\sum \limits_{k\in B}c_k T_{a,w}^n \chi_{K_k}\\
&=\sum \limits_{k\in B}c_k T_{a^n}(\varphi_n \chi_{K_k})\\
&\geq T_{a^n}\left(c_n\varphi_n \chi_{K_n}\right).
\end{align*}
This implies that $\|T_{a,w}^n y\|_p\geq c_n\|\varphi_n|_{K_n}\|_p \to \infty$ as $n\to \infty$ and $n\in B$.
Thus, $T_{a,w}$ satisfies the condition (ii) of (DCC).

Therefore, $T_{a,w}$ is distributionally chaotic.
\end{proof}



\subsection{Existence of distributionally chaotic weighted translation} 
\label{sec:existence_of_distributionally_chaotic_weighted_translation}
In this subsection, we will prove that for an ``aperiodic'' element $a\in G$,
there always exists a weight $w$ such that $T_{a,w}$ is distributionally chaotic.

We say an element $a\in G$ is aperiodic if
the cyclic subgroup generated by $a$ is \textbf{not} precompact in $G$.
The aperiodicity of the element $a$ plays a crucial role in multiple sense of chaoticity for a weighted translation $T_{a,w}$
(see more discussion in \cite{cc11,kuchen17}).

Recall that any locally compact groups are compatible with a proper right invariant metric $d$
(see more detail on \cite[Section 2]{kuchen17} and \cite{plig}).
In this metric space, the Heine-Borel property holds.
Therefore, $B_r(x):=\{y\in G|d(x,y)<r\}$ is a precompact open ball, and $B_r(xa) = B_r(x)a$ by right invariance.

\begin{lemma}
\label{aperiodic_lemma}
Let $a\in G$ be aperiodic.
Then there exist a compact neighborhood $K_0$ of the identity element $e\in G$ and $\varepsilon >0$ such that
$d(K_0a^n, K_0a^m) \ge \varepsilon$ for any distinct pair $n,m\in\mathbb{Z}$.
In particular, any union of subcollection of $\{K_0a^n\}_{n\in\mathbb{Z}}$ is a closed set in $G$.
\end{lemma}
\begin{proof}
By \cite[Theorem 2.9]{kuchen17}, $d(a^n, e)\to \infty$ as $n\to \infty$.
So $\varepsilon :=
\inf\limits_{n\in\mathbb{Z}\setminus \{0\}} \frac{d(a^n, e)}{10}>0$

Let $r \in (0, \varepsilon)$.
Define the compact neighborhood $K_0$ of $e$ to be the closure of $B_r(0)$.
Then for any $g_1, g_2\in K_0$ and $n\in\mathbb{Z}\setminus \{0\}$,
we claim that $d(g_1a^n, g_2) \ge \varepsilon$.
If $d(g_1a^n, g_2) < \varepsilon$, then
\begin{align*}
d(a^n, e)
&\le
d(g_1a^n, g_2) + d(g_1a^n, a^n) + d(g_2, e)\\
&\le
\varepsilon + r + r\\
&<
3\varepsilon\\
&<
d(a^n, e).
\end{align*}
So there is a contradiction.

Thus, for any distinct pair $n,m\in\mathbb{Z}$,
we have $d(K_0a^n, K_0a^m) = d(K_0a^{n-m}, K_0) \ge \varepsilon$.
\end{proof}
In this section, we will use this compact neighborhood $K_0$ to construct a weight $w$ and show that $T_{a,w}$ is distributionally chaotic.

The next result reveals that for any $m\in\mathbb{Z}$,
the dynamic behavior of the $T_{a,w}$-action on $T_{a^m}y$ is controlled by
the $T_{a,w}$-action on $y$ and vice versa.
\begin{lemma}\label{convergent_and_divergent_for_any_subsequence}
Let $T_{a,w}$ be an invertible weighted translation operator on $L^p(G)$.
Then for any $y\in L^p(G)$, $n, m\in\mathbb{Z}$ with $n$ large enough (explicitly, $n > 2|m|$), we have
$$M^{-2|m|}\|T_{a,w}^n y\|_p \le \|T_{a,w}^n (T_{a^m} y)\|_p\le M^{2|m|}\|T_{a,w}^n y\|_p$$
for some $M\ge 1$.
\end{lemma}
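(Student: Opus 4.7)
The plan is to reduce the comparison of $\|T_{a,w}^n(T_{a^m}f)\|_p$ and $\|T_{a,w}^n f\|_p$ to a pointwise bound on a finite product of weight values, using the explicit formula for the iterates of $T_{a,w}$ together with right invariance of the Haar measure.

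First I would unfold $T_{a,w}^n = T_{a^n}M_{\varphi_n}$ to obtain
\[ T_{a,w}^n f(x) = \psi_n(x)\,f(xa^{-n}), \qquad \psi_n(x) := \prod_{k=0}^{n-1} w(xa^{-k}), \]
so that in particular $T_{a,w}^n(T_{a^m}f)(x) = \psi_n(x)\,f(xa^{-n-m})$. Next I would apply the change of variable $y = xa^{-m}$ (equivalently, compose with the operator $T_{a^{-m}}$, which is an $L^p$-isometry because $\lambda$ is right invariant) to get
\[ \|T_{a,w}^n(T_{a^m}f)\|_p^p = \int_G |r_n(y)|^p\,|T_{a,w}^n f(y)|^p\, d\lambda(y), \qquad r_n(y) := \frac{\psi_n(ya^m)}{\psi_n(y)}. \]
Once this identity is in hand, the theorem reduces to the pointwise estimate $M^{-2|m|} \le r_n(y) \le M^{2|m|}$ followed by taking $p$-th roots.

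The real content is the cancellation in $r_n$. Writing
\[ r_n(y) = \frac{\prod_{k=0}^{n-1} w(ya^{m-k})}{\prod_{k=0}^{n-1} w(ya^{-k})}, \]
I would observe that the two index sets $\{m-k : 0 \le k \le n-1\}$ and $\{-k : 0 \le k \le n-1\}$ agree on a block of size $n-|m|$ (as soon as $n > |m|$), so after cancellation $r_n(y)$ is a product of exactly $|m|$ values of $w$ divided by $|m|$ other values of $w$, namely the boundary terms at the two ends of the shifted window. Since $T_{a,w}$ is invertible, the paper notes that $w^{-1}\in L^\infty(G)$; combined with boundedness of $w$ itself this lets me set $M := \max(\|w\|_\infty, \|w^{-1}\|_\infty) \ge 1$, so that every individual factor lies in $[M^{-1}, M]$ and therefore $r_n(y) \in [M^{-2|m|}, M^{2|m|}]$ uniformly in $y$ and in $n > |m|$.

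The only potential obstacle is the index arithmetic, which should be split into the cases $m \ge 0$ and $m < 0$ (for $m < 0$ the surviving factors sit at "later" times in the product, but the count is still $|m|$ in numerator and $|m|$ in denominator). Nothing deeper than right invariance of $\lambda$ and essential boundedness of $w^{\pm 1}$ enters the argument, which is why the statement only needs "$n$ large enough."
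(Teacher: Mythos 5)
Your argument is correct and is essentially the paper's own proof in slightly more explicit form: both reduce the claim to comparing $\varphi_n$ with its translate by $a^{\mp m}$, observe that the two products of weight values cancel except for $|m|$ boundary factors in the numerator and $|m|$ in the denominator, and bound each surviving factor by $M=\max\{\sup w,(\inf w)^{-1}\}$, which is finite by invertibility. The only cosmetic difference is that you perform the right-invariant change of variables in the integral directly, whereas the paper phrases the same step as the operator identity $T_{a,w}^n T_{a^m}=T_{a^{n+m}}M_{T_{a^{-m}}\varphi_n}$.
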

\begin{proof}
Let $M=\max\{\sup w, (\inf w)^{-1}\}$. Then $M<\infty$ by the invertibility of $T_{a,w}$. Rewrite
$T_{a^{-m}}\varphi_n=\prod \limits_{s=1+m}^{n+m}T_{a^{-s}}w$ as
$$\left\{\begin{matrix}
&\varphi_n
&\cdot &\prod \limits_{s=1}^{m}T_{a^{-s}}(w^{-1})
&\cdot &\prod \limits_{s=n+1}^{n+m}T_{a^{-s}}w
&\text{if } n>>m>0; \\
&\varphi_n
&\cdot &\prod \limits_{s=1+m}^{0}T_{a^{-s}}w
&\cdot &\prod \limits_{s=n+m+1}^{n}T_{a^{-s}}(w^{-1})
&\text{if } n>>0>m.
\end{matrix}\right.$$
Then we have that
$$M^{-2|m|}\varphi_n \le T_{a^{-m}}\varphi_n \le M^{2|m|}\varphi_n$$
for $n$ large enough.

Hence,
$$M^{-2|m|}\|T_{a,w}^n y\|_p \le \|T_{a,w}^n (T_{a^m} y)\|_p\le M^{2|m|}\|T_{a,w}^n y\|_p$$
by the computations
$$T_{a,w}^n y=T_{a^n} M_{\varphi_n} y$$
and
$$T_{a,w}^n T_{a^m} y=T_{a^n} M_{\varphi_n} T_{a^m} y=T_{a^{n+m}}M_{T_{a^{-m}}\varphi_n} y.$$
\end{proof}

\begin{theorem}
\label{existence}
Let $a\in G$ be an aperiodic element. Then
there always exists a weight $w$ such that $T_{a,w}$ is distributionally chaotic on $L^p(G)$ for any $p\in [1,\infty)$.
In particular, this operator $T_{a,w}$ is Li-Yorke chaotic on $L^p(G)$ for any $p\in [1,\infty)$.
\end{theorem}
\begin{proof}
Define
\begin{align*}
I_k := & ((2k-1)!, (2k)!] \cap \mathbb{N}, \\
J_k := & ((2k)!, (2k+1)!] \cap \mathbb{N}, \\
A' :=  & \bigcup_{k=1}^\infty I_k, \\
A''  :=  & \bigcup_{k=1}^\infty J_k
\end{align*}
and
$w\equiv 2$ on $\bigcup\limits_{n\in A'} K_0a^n$,
$w\equiv \frac{1}{2}$ on $\bigcup\limits_{n\in A''} K_0a^n$.
By Urysohn's lemma, $w$ is extended continuously to $G$.

So far, we define the operator $T_{a,w}$.
We are going to check that $T_{a,w}$ satisfies (DCCW).

Let $g_n:=a^{(2n-1)!}$.
Then
$\varphi_n(g_n) = \prod \limits_{j=1}^n w(a^{(2n-1)! + j})=2^n$
for all $n\in \mathbb{N}$.
So $\sum\limits_{n\in B} \frac{1}{\varphi_n(g_n)}$ always converges for any full upper density $B$.

Let $K_k:=K_0 g_k$ be a compact neighborhood of $g_k$.
Then by Lemma \ref{convergent_and_divergent_for_any_subsequence}, we have
\begin{align*}
\|\varphi_n|_{K_k}\|_p
&=
\|T_{a,w}\chi_{K_k}\|_p\\
&=
\|T_{a,w}T_{a^{(2n-1)!}}(\chi_{K_0})\|_p\\
&\le
M^{2(2k-1)!}\|T_{a,w}\chi_{K_0}\|_p\\
&=
M^{2(2k-1)!}\|\varphi_n\chi_{K_0}\|_p
\end{align*}
for some $M \ge 1$ and $n > 2(2k-1)!$, where $\chi_{K_k}$ and $\chi_{K_0}$ are the characteristic functions of $K_k$ and $K_0$.

Define $A := \bigcup_{\tau =1}^\infty (2(2\tau )!,(2\tau +1)!] \cap \mathbb{N} \subset A''$.

\noindent\textbf{Claim:}
$\displaystyle\lim_{n\in A}\|\varphi_n\chi_{K_0}\|_p=0$.
Thus, $\displaystyle\lim_{n\in A}\|\varphi_n\chi_{K_k}\|_p=0$ for all $k$.

Suppose that $n \in (2(2\tau )!,(2\tau +1)!]$ and $\tau >4$. We have
\begin{align*}
(\varphi_n\chi_{K_0}) (x)
&=
\prod_{j=1}^{n}w(x a^j)\chi_{K_0}(x)\\
&=
\prod_{j=1}^{(2\tau -2)!} w(x a^j)\chi_{K_0}(x)
\cdot
\prod_{j=(2\tau -2)! + 1}^{(2\tau -1)!} w(x a^j)\chi_{K_0}(x)\\
&
\cdot
\prod_{j=(2\tau -1)! + 1}^{(2\tau )!} w(x a^j)\chi_{K_0}(x)
\cdot
\prod_{j=(2\tau )! + 1}^{2(2\tau )!} w(x a^j)\chi_{K_0}(x)\\
&
\cdot
\prod_{j=2(2\tau )! + 1}^{n} w(x a^j)\chi_{K_0}(x)
\\
&\le
2^{(2\tau -2)!}
\cdot
2^{-((2\tau -1)! - (2\tau -2)!)}\\
&
\cdot
2^{(2\tau )! - (2\tau -1)!}
\cdot
2^{-(2\tau )!}\\
&
\cdot
\chi_{K_0}(x)\\
&=
2^{-2((2\tau -1)! - (2\tau -2)!)}\chi_{K_0}(x).
\end{align*}
Thus, $\displaystyle\lim_{n\in A}\|\varphi_n\chi_{K_0}\|_p=0$.

$ $

\noindent\textbf{Claim:}
$\overline{{\rm dens}}(A)=1$.

$
1
\ge
\overline{{\rm dens}}(A)
\ge
\displaystyle\lim_{\tau\to\infty} \frac{(2\tau +1)! - 2(2\tau )!}{(2\tau +1)!}=1$.
So $\overline{{\rm dens}}(A)=1$.

Thus, $T_{a,w}$ satisfies (DCCW).
\end{proof}

\begin{remark}
\label{existence_rmk}
For the construction of the weight $w$ in Theorem \ref{existence},
we can similarly utilize arguments by interchanging of $I_k$ and $J_k$ to show that
$T_{a,w^{-1}}$ is also distributionally chaotic.
We will recall this in Example \ref{counter_ex}.
\end{remark}

Similarly applying the proof of theorem above to bilateral weighted shifts,
we set $G=\mathbb{Z}$ and $a=-1$, and consider a weight function $w$ defined on $\mathbb{Z}$.
Then the weighted translation $T_{-1,w}$ is nothing but a bilateral weighted shift on $\ell^p(\mathbb{Z})$ denoted by $B_w:=T_{-1,w}$.
\begin{example}\label{ex_Bw}
Define $w:\mathbb{Z}\to \mathbb{R}$ by
$$
w(n):=
\left\{\begin{matrix}
2
&\text{if } (2k-1)!< |n| \le (2k)! \text{ for some } k\in\mathbb{N}; \\
\frac{1}{2}
&\text{if } (2k)!< |n| \le (2k+1)! \text{ for some } k\in\mathbb{N}; \\
1
&|n|\le 1.
\end{matrix}\right.$$
Then $B_w$, $B_{w^{-1}}$ and there inverse are all distributionally chaotic on $\ell^p(\mathbb{Z})$ for any $p\in [1, \infty)$.
\end{example}

\section{Distributionally irregular vectors}
\label{sec:distributionally_irregular_vectors}

In this section, we will recall the definition of distributionally irregular vectors and
discuss some properties (Theorem \ref{translated irregular vector}) of distributionally irregular vectors for weighted translations on locally compact groups.

\begin{definition}{\rm(\cite[Definition 3]{2013JFA})}\label{DIV}
Let $T$ be an operator on a Banach space $X$. Then $x\in X$ is called a {\it distributionally irregular vector} for $T$
if there exist $A,B\subset \mathbb{N}$ with $\overline{\mbox{dens}}(A)=\overline{\mbox{dens}}(B)=1$ such that
$$\lim_{n\in A}T^nx=0\ \ \mbox{and}\ \ \lim_{n\in B}\|T^nx\|=\infty.$$
\end{definition}

If $x\in X$ only satisfies the first condition in Definition \ref{DIV} i.e. $\displaystyle\lim_{n\in A}T^nx=0$ for some $A\subseteq \mathbb{N}$ with $\overline{\mbox{dens}}(A)=1$,
then the orbit of $x$ is said to be {\it distributionally near to $0$}.
On the other hand, $x$ has a {\it distributionally unbounded orbit}
if $\displaystyle\lim_{n\in B}\|T^nx\|=\infty$ for some $B\subseteq \mathbb{N}$ with $\overline{\mbox{dens}}(B)=1$.
Hence $x\in X$ is distributionally irregular if, and only if,
its orbit is both distributionally near to $0$ and distributionally unbounded.

For further study,
we denote the set of all distributionally irregular vectors of an operator $T$ by $DIV(T)$,
and define
$$X_{0,A}(T):=\left\{x\in X: \lim_{n\in A}T^nx=0\right\},$$
$$DIV_{A,B}(T):=\left\{x\in X: \lim_{n\in A}T^nx=0\ \ \mbox{and}\ \ \lim_{n\in B}\|T^nx\|=\infty\right\}$$
and
$$DIV_{A,\bullet}(T):=DIV(T)\cap X_{0,A}(T)=\bigcup_{\overline{dens}(B')=1} DIV_{A,B'}(T)$$
for fixed $A,B\subset \mathbb{N}$ with $\overline{\mbox{dens}}(A)=\overline{\mbox{dens}}(B)=1$.
If the given operator $T$ is clear,
then we write $DIV$, $X_{0,A}$, $DIV_{A,B}$ and $DIV_{A,\bullet}$
for simplification.

Throughout this section, we will show that some crucial results for distributionally irregular vectors of $T_{a,w}$.
Suppose that $DIV_{A,B}$ is non-empty. Then we have the following.
\begin{enumerate}
\item
$DIV_{A,B}$ is invariant under some operations (see Theorem \ref{translated irregular vector}).

\item
$DIV_{A,B}$ contains a cone structure $\cal{C}$ (see Theorem \ref{cone}).

\item
$DIV_{A,B}$ and $DIV_{A,\bullet }$ are both path connected
(see Corollary \ref{ab_path} and Theorem \ref{DIV_A_path_connected}).

\item
There exists some unique (up to measure zero) measurable subset $M_A$ in $G$ which is independent of $B$ such that
$$
\overline{DIV_{A,\bullet}}=\overline{DIV_{A,B}}=L^p(M_A)=\overline{X_{0,A}},
$$
(see Theorem \ref{m_ab}).

Moreover,
$$M_A a^n\subseteq M_A \text{ for } n\in \mathbb{N}.$$
In addition, if $T_{a,w}$ is invertible, then
$$M_A a^n = M_A \text{ for } n\in \mathbb{Z},$$
(see Theorem \ref{m_a}).

\item
$DIV \cap L^p(M_A)$ is connected (see Theorem \ref{m_ab}).
\end{enumerate}

In the final example (Example \ref{counter_ex}),
we give an example of distributionally chaotic $T_{a,w}$ that is related to the construction in Theorem \ref{existence}.
For this $T_{a,w}$, we describe explicitly that $DIV$ has only two connected components
and $\overline{DIV}$ is the union of two subspaces $L^p(M_A)$ and $L^p(M_{A'})$ for some disjoint $M_A$ and $M_{A'}$.

\begin{theorem}\label{translated irregular vector}
Let $T_{a,w}$ be a weighted translation on $L^p(G)$.
Then the following conditions are equivalent.
\begin{enumerate}
\item[{\rm(i)}]
$y\in DIV_{A,B}$.
\item[{\rm(ii)}]
$|y|\in DIV_{A,B}$.
\end{enumerate}

In addition, if $T_{a,w}$ is invertible,
then the above assertions are also equivalent to the following.
\begin{enumerate}
\item[{\rm(iii)}]
$T_{a^m} y\in DIV_{A,B}$ for some $m\in \mathbb{Z}$.
\item[{\rm(iv)}]
$T_{a^m} y\in DIV_{A,B}$ for all $m\in \mathbb{Z}$.
\end{enumerate}
\end{theorem}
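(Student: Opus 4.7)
The plan is to prove (i) $\Leftrightarrow$ (ii) directly from the isometry of the unweighted translations on $L^p(G)$, and to reduce the equivalence of (iii) and (iv) with (i), (ii) under invertibility to a direct application of Lemma~\ref{convergent_and_divergent_for_any_subsequence}.

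First I would observe that for any $f \in L^p(G)$ and $n \in \mathbb{N}$, the identity $T_{a,w}^n f = T_{a^n} M_{\varphi_n} f = T_{a^n}(\varphi_n f)$ holds, and $T_{a^n}$ is an $L^p$-isometry because $\lambda$ is right invariant. Since $\varphi_n$ is strictly positive, $|\varphi_n f| = \varphi_n |f|$ pointwise, hence
$$\|T_{a,w}^n f\|_p = \|\varphi_n f\|_p = \bigl\|\varphi_n |f|\bigr\|_p = \|T_{a,w}^n |f|\|_p.$$
Specializing to $f = y$, the two norm conditions $\lim_{n\in A}\|T_{a,w}^n y\|_p = 0$ and $\lim_{n\in B}\|T_{a,w}^n y\|_p = \infty$ that define membership in $DIV_{A,B}$ are preserved under passing to $|y|$ and back. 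This proves (i) $\Leftrightarrow$ (ii) with no invertibility hypothesis needed.

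For the additional equivalences under invertibility I would invoke Lemma~\ref{convergent_and_divergent_for_any_subsequence}. Given any $m \in \mathbb{Z}$, the two-sided inequality
$$M^{-2|m|}\,\|T_{a,w}^n y\|_p \;\le\; \|T_{a,w}^n (T_{a^m} y)\|_p \;\le\; M^{2|m|}\,\|T_{a,w}^n y\|_p$$
valid for all sufficiently large $n$ shows that $\|T_{a,w}^n y\|_p \to 0$ along the set $A$ if and only if $\|T_{a,w}^n(T_{a^m} y)\|_p \to 0$ along $A$, and likewise that divergence to infinity along $B$ is preserved. Consequently $y \in DIV_{A,B}$ iff $T_{a^m} y \in DIV_{A,B}$, which is exactly (i) $\Leftrightarrow$ (iv). The implication (iv) $\Rightarrow$ (iii) is trivial; for (iii) $\Rightarrow$ (i), if $T_{a^m} y \in DIV_{A,B}$ for some $m$, then applying the lemma to $f := T_{a^m} y$ with exponent $-m$ (and using $T_{a^{-m}} T_{a^m} y = y$) brings us back to $y \in DIV_{A,B}$.

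The main obstacle is essentially bookkeeping: one must ensure the constant $M = \max\{\sup w,(\inf w)^{-1}\}$ from the preceding lemma is finite, which is precisely guaranteed by the invertibility of $T_{a,w}$, and then chain the implications carefully so that the same pair $(A,B)$ is used throughout. No substantive new estimate is required beyond the fact that right invariance of Haar measure makes each $T_{a^n}$ an isometry on $L^p(G)$, and that multiplication by the positive function $\varphi_n$ commutes with taking moduli in the $L^p$-norm.
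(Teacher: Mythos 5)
Your proposal is correct and follows essentially the same route as the paper: the equivalence (i) $\Leftrightarrow$ (ii) comes from $\|T_{a,w}^n y\|_p = \|\varphi_n y\|_p = \|\varphi_n |y|\|_p$ (right invariance making $T_{a^n}$ an isometry, plus positivity of $\varphi_n$), and the equivalences with (iii) and (iv) are a direct application of the two-sided estimate in Lemma~\ref{convergent_and_divergent_for_any_subsequence}. Your write-up is in fact somewhat more explicit than the paper's (which simply cites the lemma for (iii) and (iv)), but no new idea is involved.
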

\begin{proof}
Since
$$\|T_{a,w}^n y\|_p^p=\|\varphi_n y\|_p^p=\int_G \varphi_n^p |y|^p=\|T_{a,w}^n |y|\|_p^p,$$
conditions (i) and (ii) are equivalent. By Lemma \ref{convergent_and_divergent_for_any_subsequence},
conditions (iii) and (iv) are also equivalent to (i).
\end{proof}


Let $A\subset \mathbb{N}$ be as in the definition of the distributionally irregular vector $y$.
Notice that $\overline{X_{0,A}}$ is an invariant subspace of $T_{a,w}$ with $y\in\overline{X_{0,A}}$
(see the argument in the proof (i) $\Rightarrow$ (ii) of \cite[Theorem 12]{2013JFA}).
Then the set of all distributionally irregular vectors of $T_{a,w}$ in $\overline{X_{0,A}}$ is a residual set in $\overline{X_{0,A}}$
by \cite[Proposition 7 and Proposition 9]{2013JFA}.

In particular, for the case of invertible weighted shifts $B_w:=T_{-1,w}$, we will show that the set of all distributionally irregular vectors of $B_w$ is residual in $\ell^p(\mathbb{Z})$ using Theorem \ref{translated irregular vector}.

\begin{theorem}\label{residual}
Let $B_w$ be an invertible weighted shift on $\ell^p(\mathbb{Z})$.
If $B_w$ is distributionally chaotic,
then the set of all distributionally irregular vectors of $B_w$ is residual in $\ell^p(\mathbb{Z})$.
\end{theorem}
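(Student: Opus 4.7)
The plan is to reduce the claim to the fact recalled just before the theorem statement: once one exhibits $A \subset \mathbb{N}$ with $\overline{\mathrm{dens}}(A) = 1$ for which $\overline{X_{0,A}} = \ell^p(\mathbb{Z})$, the combination of \cite[Proposition~9]{2013JFA} and the argument in the proof of (i)$\Rightarrow$(ii) of \cite[Theorem~12]{2013JFA} automatically makes $DIV(B_w)$ residual in $\ell^p(\mathbb{Z})$. So the task reduces to constructing such an $A$.

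Distributional chaos of $B_w$ supplies, by \cite[Theorem~12]{2013JFA}, a distributionally irregular vector $y$ together with a set $A$ of full upper density such that $y \in X_{0,A}$. Since $y$ has a distributionally unbounded orbit, $y \neq 0$, so I fix some $k_0 \in \mathbb{Z}$ with $y(k_0) \neq 0$. Now I exploit the coordinatewise identity
\[
\|B_w^n f\|_p^p \;=\; \|\varphi_n f\|_p^p \;=\; \sum_{k \in \mathbb{Z}} \varphi_n(k)^p\,|f(k)|^p,
\]
which holds because $T_{-n}$ is an isometry of $\ell^p(\mathbb{Z})$ and $\varphi_n > 0$. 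Applying it with $f = y$ and retaining only the $k_0$-th summand yields $\|B_w^n e_{k_0}\|_p = \varphi_n(k_0) \leq |y(k_0)|^{-1}\,\|B_w^n y\|_p$, and the right-hand side tends to $0$ along $A$. Hence $e_{k_0} \in X_{0,A}$.

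Finally, Lemma~\ref{convergent_and_divergent_for_any_subsequence}, whose invertibility hypothesis matches the standing assumption on $B_w$, tells us that $X_{0,A}$ is invariant under each shift $T_{a^m} = T_{-m}$ with $m \in \mathbb{Z}$. Therefore $e_k = T_{-(k_0-k)}\,e_{k_0} \in X_{0,A}$ for every $k \in \mathbb{Z}$, and linearity of $X_{0,A}$ gives the dense subspace $c_{00}(\mathbb{Z}) \subset X_{0,A}$, so $\overline{X_{0,A}} = \ell^p(\mathbb{Z})$ and the reduction closes the argument. The one delicate step is the extraction of a single basis vector $e_{k_0}$ from the merely qualitative membership $y \in X_{0,A}$; this succeeds here because of the pointwise decomposition of $\|B_w^n \cdot\|_p^p$, a feature of the discrete shift case that would be the principal obstacle to an analogous argument on a general locally compact group.
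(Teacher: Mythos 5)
Your proposal is correct and follows essentially the same route as the paper: both arguments reduce the claim to showing $\overline{X_{0,A}}=\ell^p(\mathbb{Z})$ via \cite[Proposition 9]{2013JFA}, extract a single basis vector $e_{k_0}$ from the coordinatewise comparison $\varphi_n(k_0)\le |y(k_0)|^{-1}\|B_w^n y\|_p$ (the paper phrases this as $|y(g)|\chi_g\le |y|$), and then propagate to all of $c_{00}(\mathbb{Z})$ by translation invariance of $X_{0,A}$, which in both cases ultimately rests on Lemma \ref{convergent_and_divergent_for_any_subsequence}.
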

\begin{proof}
If $B_w$ is distributionally chaotic,
then $B_w$ has a distributionally irregular vector $y$ by \cite[Theorem 12]{2013JFA}.
Let $A$ be defined as in the definition of the distributionally irregular vector $y$ and
$$X_{0,A}:=\left\{x: \lim\limits_{n\in A}B_{w}^n x=0\right\}.$$

Let $g\in \mathbb{Z}$ with $y(g)\neq 0$, and let $\chi_g$ be the characteristic function of $g$.
Since $|y(g)|\chi_g \le |y| \in X_{0,A}$, we have $\lim\limits_{n\in A}B_{w}^n |y(g)|\chi_g=0,$
which implies $\chi_g\in X_{0,A}$. By Theorem \ref{translated irregular vector}, for any $m\in\mathbb{Z}$, the vector $T_m\chi_g$ belongs to $X_{0,A}$ as well.
So $X_{0,A}$ contains the standard basis of $\ell^p(\mathbb{Z})$.
Hence, $\overline{X_{0,A}}=\ell^p(\mathbb{Z})$. Therefore, the set of all distributionally irregular vectors of $B_w$ is residual in $\ell^p(\mathbb{Z})$.
\end{proof}

\begin{remark}
Let $B_w$ be distributionally chaotic.
We can construct $T_{-2, w'}$ on $\ell^p(\mathbb{Z})$ by the following:
$$w'(n):=
\left\{\begin{matrix}
w(\frac{n}{2}) & \text{if $n$ is even};\\
1    & \text{if $n$ is odd}.
\end{matrix}\right.$$
Then $T_{-2, w'}$ acts on $\ell^p(\mathbb{Z})=\ell^p(2\mathbb{Z})\oplus \ell^p(2\mathbb{Z}+1)$ with
a distributionally irregular vector $y$ with support in $2\mathbb{Z}$.

Let $A$ be as in the definition of the distributionally irregular vector $y$.
We have $\overline{X_{0,A}}=\ell^p(2\mathbb{Z})\neq \ell^p(\mathbb{Z})$.
We see that
$\overline{X_{0,A}}$ would not necessary equal to $L^p(G)$ for weighted translations $T_{a,w}$
in general.
\end{remark}





Next, we will show that $DIV_{A,B}(T_{a,w})$ contains a cone structure.

\begin{theorem}\label{cone}
Let $T_{a,w}$ be a distributionally chaotic weighted translation operator on $L^p(G)$.
Consider a non-empty $DIV_{A,B}$.
Define the cone $\cal{C}$ by
$$\cal{C}:=\left\{\sum\limits_{\text{finite }j}c_j |y_j|: c_j> 0 \text{, where $y_i\in DIV_{A,B}$}\right\}.$$
Then each nonzero vector in the set $\cal{C}$ is also a distributionally irregular vector.
Furthermore, if $T_{a,w}$ is invertible, then $\cal{C}$ is invariant under these operators $T_a$, $T_{a,w}$, $M_w$, $M_{T_{a^n} w}$, $M_{\varphi_n}$ and their inverses.
\end{theorem}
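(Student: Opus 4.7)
The plan is to handle the two claims separately. For the first --- that each nonzero $z = \sum_{j=1}^k c_j|y_j| \in \mathcal{C}$ is distributionally irregular --- I will combine linearity with the positivity of the summands. By Theorem \ref{translated irregular vector}, each $|y_j|$ is itself in $DIV_{A,B}(T_{a,w})$, so $T_{a,w}^n z = \sum c_j T_{a,w}^n|y_j| \to 0$ along $A$ by linearity. For the divergence along $B$, I would exploit the identity $T_{a,w}^n|y_j|(x) = \varphi_n(xa^{-n})|y_j|(xa^{-n}) \ge 0$: since $z$ is nonzero, some term $c_{j_0}|y_{j_0}|$ is nonzero, and the pointwise inequality $T_{a,w}^n z \ge c_{j_0} T_{a,w}^n|y_{j_0}|$ yields $\|T_{a,w}^n z\|_p \ge c_{j_0}\|T_{a,w}^n |y_{j_0}|\|_p \to \infty$ along $B$.

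For the invariance statement, the key observation I want to exploit is that each operator $S$ in the list, and each of their inverses, is a composition of translations $T_b$ and multiplications $M_g$ by strictly positive bounded functions whose reciprocals are also bounded (this is where invertibility of $T_{a,w}$, i.e.\ $w^{-1}\in L^\infty$, enters). Such operators satisfy $|Sf| = S|f|$ pointwise, hence $Sz = \sum c_j |Sy_j|$, and the entire task reduces to showing that $Sy_j \in DIV_{A,B}(T_{a,w})$ whenever $y_j \in DIV_{A,B}(T_{a,w})$.

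To establish this reduction, I plan to use the formula $T_{a,w}^n f = T_{a^n} M_{\varphi_n} f$ together with the $L^p$-isometry of $T_{a^n}$ to derive, for each $S$, a pointwise identity $|T_{a,w}^n Sy_j|(x) = \psi_n^{(S)}(x)|T_{a,w}^n y_j|(x)$ with $\psi_n^{(S)}$ pinched between constants independent of $n$. For $S = T_a$ the claim is exactly Theorem \ref{translated irregular vector}(iii)--(iv); for $S = T_{a,w}$ and $S = T_{a,w}^{-1}$ the continuity of $T_{a,w}$ and the boundedness of its inverse suffice; for $S = M_w$ one computes $\psi_n^{(S)}(x) = w(xa^{-n})$, which is globally bounded between $\|w^{-1}\|_\infty^{-1}$ and $\|w\|_\infty$; and analogous uniform bounds apply for $M_{T_{a^k}w}$ and $M_{\varphi_k}$ (with $k$ fixed), using that these functions are finite products of translates of $w$ and $w^{-1}$. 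The main obstacle I foresee is precisely ensuring these comparison constants are uniform in $n$; this is exactly where invertibility is essential, since without $w^{-1}\in L^\infty$ the lower bounds required for the divergence along $B$ would collapse. The inverses of the listed operators are of the same form, so the same reasoning covers them.
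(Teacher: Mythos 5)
Your proposal is correct and follows essentially the same route as the paper: linearity gives the convergence along $A$, the pointwise bound $z\ge c_{j_0}|y_{j_0}|$ together with positivity of $T_{a,w}^n$ gives the divergence along $B$, and the invariance rests on two-sided comparison constants supplied by $w,w^{-1}\in L^\infty(G)$. The only cosmetic difference is that the paper disposes of $M_w$, $M_{T_{a^n}w}$ and $M_{\varphi_n}$ by writing them as compositions of $T_a^{\pm1}$ and $T_{a,w}^{\pm1}$ and only checks those two generators, whereas you verify each operator directly via a uniform pointwise comparison; both arguments reduce to the same estimates.
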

\begin{proof}

First, observe that each vector of $\cal{C}$ is distributionally near to $0$ w.r.t. $A$.
On the other hand, the vector $\sum \limits_{\text{finite }j}c_j |y_j|$ is also distributionally unbounded w.r.t. $B$
by the inequality $\sum\limits_{\text{finite }j}c_j |y_j|\ge c_{j_0} |y_{j_0}|,$ where
$|y_{j_0}|$ is a distributionally irregular vector for some fixed $j_0$. Combing all of these, one obtains that each element in $\cal{C}$ is in $DIV_{A,B}$.

For the invariance of $\cal{C}$, we only need to prove the two cases $T_a$ and $T_{a,w}$ since the others are just the composition of them.
Indeed, $\cal{C}$ is invariant under $T_{a,w}$ by the inequality
$$m\|T_{a,w}^n y\|_p\le\|T_{a,w}^{n+1} y\|_p\le M\|T_{a,w}^n y\|_p$$
for some constant $m, M>0$.
Moreover, the invariance of $\cal{C}$ under $T_a$ can be deduced by Theorem \ref{translated irregular vector}, which relies on the invertibility of $T_{a,w}$.
\end{proof}

\begin{remark}
We note that $\cal{C}$ is path connected because it is convex.
\end{remark}

Let $y$ be a distributionally irregular vector of $T_{a,w}$.
Construct the equivalence class $[y]$ of $y$ by
$$[y]:=\{z: c_1|z|\le |y|\le c_2|z|\ \mbox{for some}\ c_1,c_2>0\ \mbox{which only depend on}\ y,z\}.$$
Then all elements in $[y]$ are distributionally irregular vectors of $T_{a,w}$.

\begin{theorem}\label{equivalence_relation}
Let $T_{a,w}$ be a weighted translation operator on $L^p(G)$, and let $y$ be a distributionally irregular vector of $T_{a,w}$. Then we have the following.
\begin{enumerate}
\item[{\rm(i)}]
All vectors in the equivalence class $[y]$ are distributionally irregular vectors.
\item[{\rm(ii)}]
The class $[y]$ is path connected.
\item[{\rm(iii)}]
The class $[y]$ is dense in the closed subspace $L^p(\{y\neq 0\})$, i.e. $$\overline{[y]} = L^p(\{y\neq 0\}).$$
In particular, the set $DIV\cap L^p(\{y\neq 0\})$ is connected,
where $\{y\neq 0\}:=\{x\in G : y(x)\neq 0\}$.
\end{enumerate}
\end{theorem}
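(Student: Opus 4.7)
The plan is to reduce each item to the identity $\|T_{a,w}^n z\|_p^p = \int_G \varphi_n^p |z|^p$, already used in the proof of Theorem \ref{translated irregular vector}, together with explicit constructions in (ii) and (iii). For (i), if $z \in [y]$ with $c_1|z| \le |y| \le c_2|z|$, then raising to the $p$-th power and integrating against $\varphi_n^p$ gives
\[
c_1^p \|T_{a,w}^n z\|_p^p \le \|T_{a,w}^n y\|_p^p \le c_2^p \|T_{a,w}^n z\|_p^p,
\]
so the vanishing of $\|T_{a,w}^n y\|_p$ along $A$ and its divergence along $B$ transfer verbatim to $z$, placing $z$ in $DIV_{A,B}$.

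For (ii), the vectors $y_1, y_2 \in [y]$ have common essential support $\{y \neq 0\}$. Writing $y_j = |y_j| e^{i\theta_j}$ for a measurable branch $\theta_j : \{y \neq 0\} \to (-\pi, \pi]$, I would define
\[
\gamma(t)(x) := \bigl[(1-t)|y_1(x)| + t|y_2(x)|\bigr]\, e^{i[(1-t)\theta_1(x) + t\theta_2(x)]}
\]
on $\{y \neq 0\}$ and $\gamma(t)(x) := 0$ otherwise. The modulus $|\gamma(t)| = (1-t)|y_1| + t|y_2|$ is sandwiched between $c_2^{-1}|y|$ and $c_1^{-1}|y|$, so $\gamma(t) \in [y]$. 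Continuity in $L^p$-norm follows from the pointwise Lipschitz bound $|\gamma(t) - \gamma(s)| \le (1 + 2\pi)|t - s|(|y_1| + |y_2|)$, obtained via $|e^{ia} - e^{ib}| \le |a - b|$ and the triangle inequality, together with $|y_1| + |y_2| \in L^p(G)$.

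For (iii), write $E := \{y \neq 0\}$. Given $z \in L^p(E)$, set $A_n := \{x \in E : n^{-1}|y(x)| \le |z(x)| \le n|y(x)|\}$ and
\[
z_n := z\chi_{A_n} + n^{-1} y \chi_{E \setminus A_n}.
\]
Then $|z_n|$ is comparable to $|y|$ with constants $n^{-1}$ and $n$, so $z_n \in [y]$. Splitting $E \setminus A_n = B_n \cup C_n$ with $B_n := \{|z| < n^{-1}|y|\}$ and $C_n := \{|z| > n|y|\}$, on $B_n$ we have $|z_n - z| \le 2n^{-1}|y|$ and on $C_n$ we have $|z_n - z| \le (1 + n^{-2})|z|$; since $C_n \downarrow \emptyset$ on $E$, dominated convergence yields $\|z_n - z\|_p \to 0$. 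Hence $[y]$ is dense in $L^p(E)$. Finally, part (i) gives $[y] \subseteq DIV \cap L^p(E) \subseteq L^p(E) = \overline{[y]}$, and since $[y]$ is path-connected (hence connected) by (ii), the standard fact that any set sandwiched between a connected set and its closure is connected delivers the connectedness of $DIV \cap L^p(E)$. The principal hurdle is (ii): keeping the path inside $[y]$ at every intermediate $t$ is precisely where the complex scalar field is essential, since the naive real straight-line $(1-t)y_1 + ty_2$ can vanish at interior times on sets where $y$ does not.
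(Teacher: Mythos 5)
Your proof is correct, and parts (i) and (ii) follow essentially the paper's route: (i) is the same two-sided comparison (the paper leaves the integral identity $\|T_{a,w}^n z\|_p^p=\int_G\varphi_n^p|z|^p$ implicit), and your single path $\gamma(t)$ interpolating modulus and phase simultaneously is a compressed version of the paper's concatenation of the modulus segment $t\mapsto t|y|+(1-t)|z|$ with the phase rotation $t\mapsto r_g e^{it\theta_g}$; your Lipschitz bound $(1+2\pi)|t-s|\,(|y_1|+|y_2|)$ actually makes the continuity cleaner than the paper's appeal to dominated convergence. The genuine divergence is in (iii). The paper first shows that any $\zeta$ supported in $S_{y,\alpha,\beta}=\{\alpha<|y|<\beta\}$ with $|\zeta|$ bounded above and bounded away from $0$ on its support lies in $\overline{[y]}$, by verifying $\zeta+\varepsilon y\in[y]$ for small $\varepsilon>0$, and then reaches a general element of $L^p(\{y\neq0\})$ through a further reduction to bounded, compactly supported functions cut down to $S_{y,1/n,n}\cap\{|\varphi|>1/n\}$. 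You instead approximate an arbitrary $z\in L^p(\{y\neq0\})$ in a single step by $z_n=z\chi_{A_n}+n^{-1}y\chi_{E\setminus A_n}$ with $A_n=\{n^{-1}|y|\le|z|\le n|y|\}$: the pointwise sandwich $n^{-1}|y|\le|z_n|\le n|y|$ places $z_n$ in $[y]$ outright, and your split of $E\setminus A_n$ into $\{|z|<n^{-1}|y|\}$ and $\{|z|>n|y|\}$ gives the $L^p$ convergence correctly. This buys a shorter argument with no intermediate density step, at no extra cost; both are elementary. Your closing observation that $DIV\cap L^p(\{y\neq0\})$ is squeezed between the connected set $[y]$ and its closure is exactly the standard fact the paper leaves unstated.
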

\begin{proof}
(i) Let $z\in [y]$. Then, by the inequality $c_1|z|\le |y|\le c_2|z|$, $z$ is a distributionally irregular vector.

(ii) Let $z\in [y]$. We define a path $p':[0,1]\to [y]$ from $|z|$ to $|y|$ by
$$p'(t)=t|y|+(1-t)|z|.$$
Then $p'$ is continuous since $t(|y|-|z|)$ converges to $0$ in the $p$-norm as $t\to 0$ by the dominated convergence theorem.
On the other hand, let $g\in G$, $r_g=|z(g)|$ and $\theta_g=\mbox{Arg}\ z(g)$. Then $z(g)=r_g e^{i\theta_g}$.
Define another path $p'':[0,1]\to [y]$ from $|z|$ to $z$ by
$$p''(t)(g)=r_g e^{it\theta_g}$$
where $|p''(t)|=|z|$ for all $t$. Hence the class $[y]$ is path connected by $p'$ and $p''$.

(iii) Let $$S_{y, \alpha, \beta}=\{x: \alpha< |y(x)|< \beta\}$$
where $0 < \alpha < 1 < \beta < \infty$.
Let $\zeta \in L^p(S_{y, \alpha, \beta})$ be a simple function.
Define $m:=\inf_{\zeta(g)\neq 0}|\zeta(g)|$ and $M:=\sup_{g\in G}|\zeta(g)|$.
By the definition of simple function, we have $0<m \le M<\infty$.
Then one can find $\varepsilon_0:=\min\{\frac{m}{2\beta},1\} >0$ such that $\zeta +\varepsilon y\in [y]$ for $\varepsilon\in (0,\varepsilon_0 )$. Thus $\zeta\in \overline{[y]}$.

To see this, let $c_1=\varepsilon $ and $c_2=\max\{\frac{M}{\alpha},1\}+1$.
If $g\in \{\zeta = 0\}$, then $c_1|y|(g)\le |\zeta+\varepsilon y|(g)\le c_2|y|(g)$.
On the other hand, for $g\in \{\zeta \neq 0\}\subseteq S_{y, \alpha, \beta}$,
$$2\varepsilon |y(g)|\le m\frac{|y(g)|}{\beta}\le m\le |\zeta(g)|.$$
So we have
\[\varepsilon |y(g)|
\le |\zeta(g)|-\varepsilon |y(g)|
\le |(\zeta+\varepsilon y)(g)|
\le |\zeta(g)|+|y(g)|
\le (\frac{M}{\alpha}+1)|y(g)|
\le c_2|y(g)|
.\]
Hence, $c_1|y|\le |\zeta+\varepsilon y|\le c_2|y|$.
Thus, $\zeta +\varepsilon y\in [y]$.

Next, let $\varphi \in L^p(\{y\neq 0\})$ be a simple function.
Consider
$$\zeta_n:=\varphi \chi_{S_{y, \frac{1}{n},n}}\chi_{\{|\varphi|>\frac{1}{n}\}}\in L^p(S_{y, \frac{1}{n},n}).$$
By the discussion above, $\zeta_n\in \overline{[y]}$ and $\zeta_n\to \varphi$ in $p$ norm as $n\to \infty$.
Hence, $\varphi\in \overline{[y]},$ which says that $\overline{[y]}= L^p(\{y\neq 0\})$.

Finally, since $[y]$ is path connected and dense in $L^p(\{y\neq 0\})$, the fact
$$[y]\subseteq DIV\cap L^p(\{y\neq 0\}) \subseteq L^p(\{y\neq 0\})$$
implies that $DIV\cap L^p(\{y\neq 0\})$ is connected.
\end{proof}

\begin{remark}
In Theorem \ref{equivalence_relation}, we cannot replace $\{y\neq 0\}$ by $supp(y)$ since the measure of the boundary of $\{y\neq 0\}$ could be positive. For instance, the boundary of $\{\chi_U\neq 0\}$ has the positive measure, where $U$ denotes the complement of Cantor-like set in $[0,1]$.

\end{remark}

\begin{corollary}\label{ab_path}
Let $T_{a,w}$ be a distributionally chaotic weighted translation operator on $L^p(G)$.
Then the set $DIV_{A,B}$ is path connected.
\end{corollary}
\begin{proof}
If $DIV_{A,B} = \emptyset$, then there is nothing need to say.
If $y,y'\in DIV_{A,B}$, then we have
$|y|\in [y]\cap \cal{C}$
and
$|y'|\in [y']\cap \cal{C}$.
On the other hand, since $[y]$, $[y']$ and $\cal{C}$ are path connected subsets of $DIV_{A,B}$.
So there is a path in $DIV_{A,B}$ passing through $|y|$ and $|y'|$ that connects $y$ and $y'$.
Thus, $DIV_{A,B}$ is path connected.
\end{proof}

\begin{corollary}\label{countable_union}
Let $T_{a,w}$ be a weighted translation operator on $L^p(G)$ and let $\{y_i\}_{i=1}^{\infty}\subseteq DIV_{A,B}$. Then $DIV_{A,B}\cap L^p(\bigcup\limits_{i=1}^{\infty}\{y_i\neq 0\})$ is dense in $L^p(\bigcup\limits_{i=1}^{\infty}\{y_i\neq 0\})$. In particular, $DIV \cap L^p(\bigcup\limits_{i=1}^{\infty}\{y_i\neq 0\})$ is connected.
\end{corollary}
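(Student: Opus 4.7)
The plan is to reduce the infinite-union statement to the finite-union case handled in Theorem \ref{equivalence_relation}(iii) via a truncation-and-approximation argument, and then to deduce the connectedness from the resulting density together with a standard topological sandwich. Throughout, write $U:=\bigcup_{i=1}^{\infty}\{y_i\neq 0\}$ for brevity.

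For the density, I would approximate an arbitrary bounded, compactly supported $\varphi\in L^p(U)$ (such functions are dense in $L^p(U)$). Since $\varphi$ vanishes off $U$, dominated convergence applied to the increasing sets $U_N:=\bigcup_{i=1}^N\{y_i\neq 0\}$ yields $\varphi\chi_{U_N}\to\varphi$ in $L^p$. Fix $N$ with $\|\varphi-\varphi\chi_{U_N}\|_p<\varepsilon/2$ and set $y_{(N)}:=\sum_{i=1}^N|y_i|$; by Theorem \ref{cone}, $y_{(N)}\in DIV_{A,B}$ and $\{y_{(N)}\neq 0\}=U_N$. Theorem \ref{equivalence_relation}(iii), applied to $y_{(N)}$, produces $z\in[y_{(N)}]$ with $\|z-\varphi\chi_{U_N}\|_p<\varepsilon/2$. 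Since every element of $[y_{(N)}]$ shares the pointwise zero set of $y_{(N)}$, we get $z\in DIV_{A,B}\cap L^p(U_N)\subseteq DIV_{A,B}\cap L^p(U)$ and $\|z-\varphi\|_p<\varepsilon$, which is the density.

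For the connectedness, I first verify that $DIV_{A,B}\cap L^p(U)$ is path-connected by rerunning the argument of Remark \ref{ab_path} with explicit support tracking. For $z_1,z_2\in DIV_{A,B}\cap L^p(U)$, the paths $z_j\to|z_j|$ traced in $[z_j]$ via $r_ge^{it\theta_g}$ preserve the support of $z_j$, while the convex interpolation $t|z_1|+(1-t)|z_2|\in\cal{C}$ is supported on $\{z_1\neq 0\}\cup\{z_2\neq 0\}\subseteq U$; hence all three sub-paths remain inside $L^p(U)$. Combined with the density above, we then have
\[
DIV_{A,B}\cap L^p(U)\ \subseteq\ DIV\cap L^p(U)\ \subseteq\ \overline{DIV_{A,B}\cap L^p(U)}=L^p(U),
\]
and the standard fact that a set squeezed between a connected set and its closure is connected delivers connectedness of $DIV\cap L^p(U)$.

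The only point requiring care is the bookkeeping of supports: every approximant produced by Theorem \ref{equivalence_relation}(iii) and every point along the connecting paths must be verified to lie inside $L^p(U)$. This is controlled by the two elementary identities $\{z\neq 0\}=\{y\neq 0\}$ for $z\in[y]$ and $\mathrm{supp}\bigl(\sum c_j|y_j|\bigr)=\bigcup_j\{y_j\neq 0\}$ for positive finite combinations; with these in hand, the remainder is a direct invocation of the earlier results.
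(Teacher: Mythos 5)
Your proposal is correct and follows essentially the same route as the paper: truncate to the finite union via $y_{(N)}=\sum_{i=1}^{N}|y_i|\in\mathcal{C}\subseteq DIV_{A,B}$, invoke the density of the equivalence class $[y_{(N)}]$ in $L^p(\cup_{i=1}^{N}\{y_i\neq 0\})$ from Theorem \ref{equivalence_relation}(iii) to approximate bounded compactly supported functions, and obtain connectedness from path-connectedness of $DIV_{A,B}\cap L^p(U)$ (via $|z|$ and convex combinations in the cone) together with the sandwich between a connected set and its closure. Your version merely makes explicit two points the paper leaves implicit, namely the support bookkeeping and the topological sandwich step.
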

\begin{proof}
Let $y_N':=\sum\limits_{i=1}^N|y_i|$. By the fact $y_N'\in \cal{C}$, we get that $y_N'\in DIV_{A,B}$.
Let $\zeta \in L^p(\bigcup\limits_{i=1}^{\infty}\{y_i\neq 0\})$ be bounded and compact support.
Consider
$$f_N:=\zeta\chi_{\{y_N'\neq 0\}}\in L^p(\{y_N'\neq 0\})=\overline{[y_N']}\subseteq \overline{DIV_{A,B}\cap L^p(\bigcup\limits_{i=1}^{\infty}\{y_i\neq 0\})}.$$
Then
$\zeta\in \overline{DIV_{A,B}\cap L^p(\bigcup\limits_{i=1}^{\infty}\{y_i\neq 0\})}$
since $\zeta$ is a limit of $\{f_N\}$.
Hence
$$L^p(\bigcup\limits_{i=1}^{\infty}\{y_i\neq 0\})=\overline{DIV_{A,B}\cap L^p(\bigcup\limits_{i=1}^{\infty}\{y_i\neq 0\})}.$$

To see that $DIV\cap L^p(\bigcup\limits_{i=1}^{\infty}\{y_i\neq 0\})$ is connected, it is sufficient to show that $DIV_{A,B} \cap L^p(\bigcup\limits_{i=1}^{\infty}\{y_i\neq 0\})$ is path connected. Let $z,z'\in DIV_{A,B}\cap L^p(\bigcup\limits_{i=1}^{\infty}\{y_i\neq 0\})$.
As in the proof of Corollary \ref{ab_path}, we can find a path starting from $z$ to $z'$ through $|z|$ and $|z'|$.
\end{proof}

\begin{remark}
It should be noted that in the above corollary we cannot state the stronger statement
$$L^p(\bigcup_{y\in DIV_{A,B}}\{y\neq 0\})=\overline{DIV_{A,B}},$$
which looks more natural.
This is because the union $\bigcup\limits_{y\in DIV_{A,B}}\{y\neq 0\}$ is uncountable, and the measurability could be doubtful.
But there is a way in which we can improve the result of Corollary \ref{countable_union}
from
$$L^p(\bigcup\limits_{i=1}^{\infty}\{y_i\neq 0\})=\overline{DIV_{A,B}\cap L^p(\bigcup\limits_{i=1}^{\infty}\{y_i\neq 0\})}$$
to
$$L^p(\bigcup\limits_{i=1}^{\infty}\{y_i\neq 0\})=\overline{DIV_{A,B}},$$
by selecting the sequence $\{y_i\}_{i=1}^{\infty}$ properly
(see Theorem \ref{m_ab}).
\end{remark}

\begin{theorem}
\label{DIV_A_path_connected}
If $DIV_{A,B}(T_{a,w})$ is nonempty,
then $DIV_{A,\bullet }(T_{a,w})$ is path connected.
\end{theorem}
\begin{proof}
According to Corollary \ref{ab_path}, $DIV_{A,B}$ is path connected.
Moreover,
$$DIV_{A,\bullet}=DIV\cap X_{0,A}=\bigcup_{\overline{dens}(B')=1}DIV_{A,B'}.$$
So it is sufficient to show that $DIV_{A,B'}\cap DIV_{A,B''}\neq \emptyset$ if both $DIV_{A,B'}$ and $DIV_{A,B''}$ are nonempty.

Let $y'\in DIV_{A,B'}$ and $y''\in DIV_{A,B''}$. Then $y:=|y'|+|y''|$ is in $X_{0,A}$. Also, $y$ is  distributionally unbounded w.r.t. $B'$ and $B''$
by $|y'|\le y$ and $|y''|\le y$ respectively. Hence, $y\in DIV_{A,B'}\cap DIV_{A,B''}$.
\end{proof}

\begin{lemma}
\label{separable}
Let $X$ be a second countable space, and let $Y$ be an arbitrary subset of $X$. Then there exists a sequence $\{y_i\}_{i=1}^{\infty}$ in $Y$ such that the sequence is dense in $Y$. Hence, $\overline{\{y_i\}}=\overline{Y}$ in $X$.
\end{lemma}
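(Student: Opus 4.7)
The plan is to exploit second countability in the most direct way: fix a countable base $\{U_n\}_{n=1}^{\infty}$ for the topology of $X$, and then use the Axiom of Choice to pick one point of $Y$ from each basis element that meets $Y$. More precisely, I would let $\mathbb{N}'=\{n\in\mathbb{N}:U_n\cap Y\neq\emptyset\}$ and, for each $n\in\mathbb{N}'$, select some $y_n\in U_n\cap Y$. The resulting family $\{y_n\}_{n\in\mathbb{N}'}$ is a (possibly finite) subset of $Y$, which can be reindexed as a sequence $\{y_i\}_{i=1}^{\infty}$ in $Y$ by repeating a single point if necessary.

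The key verification is that $\{y_i\}$ is dense in $Y$. For this I would take an arbitrary $y\in Y$ and an arbitrary open neighborhood $V$ of $y$ in $X$. Since $\{U_n\}$ is a base, there exists some $n$ with $y\in U_n\subseteq V$. Then $U_n\cap Y\ni y$, so $n\in\mathbb{N}'$ and hence $y_n$ was chosen; in particular $y_n\in U_n\subseteq V$. This shows $V\cap\{y_i\}\neq\emptyset$, so $y\in\overline{\{y_i\}}$. Therefore $Y\subseteq\overline{\{y_i\}}$, and taking closures gives $\overline{Y}\subseteq\overline{\{y_i\}}$. The reverse inclusion is immediate from $\{y_i\}\subseteq Y$, yielding $\overline{\{y_i\}}=\overline{Y}$.

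There is really no main obstacle here; the only mild subtleties are the use of (countable) choice to pick the $y_n$'s and the harmless reindexing step needed if only finitely many basis elements meet $Y$, both of which are standard. This is a classical fact about second countable spaces (every subspace is again second countable, hence separable), and the argument above is just the direct hereditary separability proof specialized to producing the witness sequence $\{y_i\}$ inside $Y$.
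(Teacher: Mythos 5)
Your proof is correct and follows essentially the same route as the paper: the paper simply invokes in one line that a subspace of a second countable space is second countable, hence separable, while you write out the standard proof of exactly that fact (choosing a point of $Y$ from each basis element meeting $Y$). Both the choice step and the reindexing remark are handled appropriately.
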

\begin{proof}
The subspace $Y$ is separable since it is second countable.
\end{proof}

\begin{theorem}
\label{m_a}
There exists a unique (up to measure zero) measurable set $M_A\subseteq G$ such that the closed subspace $\overline{X_{0,A}(T_{a,w})}$ is of the form $L^p(M_A)$.

Moreover,
$$M_A a^n\subseteq M_A \text{ for } n\in \mathbb{N}.$$
In addition, if $T_{a,w}$ is invertible, then
$$M_A a^n = M_A \text{ for } n\in \mathbb{Z}.$$
\end{theorem}
\begin{proof}
Applying Lemma \ref{separable} with $X=L^p(G)$ and $Y=X_{0,A}$,
there exists $\{y_i\}_{i=1}^{\infty}$ in $X_{0,A}$ such that $\overline{\{y_i\}}=\overline{X_{0,A}}$.
Let $M_A:=\bigcup_i \{y_i\neq 0\}$. Then $\overline{X_{0,A}}=\overline{\{y_i\}}\subseteq L^p(M_A)$.

On the other hand, let $M$ be a subset of $M_A$ with finite measure,
and let
$$M_{i,k}:=M\cap \left(\bigcup_{\ell=1}^i\{y_{\ell}\neq 0\}\setminus \bigcup_{\ell=1}^{i-1}\{y_{\ell}\neq 0\}\right)\cap \Biggl\{\frac{1}{k}<|y_i|\le\frac{1}{k-1}\Biggr\},$$
where $\frac{1}{k-1}:=\infty$ when $k=1$ for convenience.
Then $M=\coprod_{i,k}M_{i,k}$ is a countable disjoint union.
Moreover, by $0\le \frac{1}{k}\chi_{M_{i,k}}\le |y_i|$,
we have $\chi_{M_{i,k}}\in X_{0,A}$ which implies $\chi_M\in \overline{X_{0,A}}$.
Therefore, all the simple functions of $L^p(M_A)$ are contained in $\overline{X_{0,A}}$.

Combing all of these, $\overline{X_{0,A}}=L^p(M_A)$,
and the uniqueness follows by the fact that $L^p(M')=L^p(M'')$ implies $M'=M''$ almost everywhere.

Next, we will show that $M_A a^n\subseteq M_A \text{ for } n\in \mathbb{N}.$
Since $X_{0,A}$ is invariant under $T_{a,w}$,
we have $T_{a,w}y_i\in X_{0,A} \subseteq L^p(M_A).$
This means that $\{y_i\neq 0\}a=\{T_{a,w}y_i\neq 0\}\subseteq M_A$.
Thus, $M_A a=\bigcup_i\{y_i\neq 0\}a\subseteq M_A$.
If $T_{a,w}$ is invertible, then $X_{0,A}$ is also invariant under $T_{a,w}^{-1}$.
Hence, $M_A a^n = M_A$.
\end{proof}

\begin{theorem}
\label{m_ab}
Suppose that $DIV_{A,B}(T_{a,w})$ is nonempty in $L^p(G)$.
We have
$$
\overline{DIV_{A,\bullet}}=\overline{DIV_{A,B}}=L^p(M_A)=\overline{X_{0,A}},
$$
where $M_A$ is induced by Theorem \ref{m_a}.

In particular, the closure of $DIV$ is a union of closed subspaces of the form $L^p(M)$ for some $M$s.
Furthermore, by Corollary \ref{countable_union}, $DIV \cap L^p(M_A)$ is connected.

\end{theorem}
\begin{proof}
By setting $X=L^p(G)$ and $Y=DIV_{A,B}$ in Lemma \ref{separable}, there exists $\{y_i\}_{i=1}^{\infty}$ in $DIV_{A,B}$ so that $\overline{\{y_i\}}=\overline{DIV_{A,B}}$.
Let $M_{A,B}:=\bigcup_i \{y_i\neq 0\}$.
Then by Corollary \ref{countable_union}, we have
$$\overline{DIV_{A,B}}=\overline{\{y_i\}}\subseteq L^p(M_{A,B})=\overline{DIV_{A,B}\cap L^p(M_{A,B})}\subseteq \overline{DIV_{A,B}}.$$
Hence $\overline{DIV_{A,B}}=L^p(M_{A,B})$.

By the assumption that $DIV_{A,B}$ is nonempty,
we may choose $y\in DIV_{A,B}$ to be nonnegative.
Assume that $K:=M_A\setminus M_{A,B}$ is non-null.
Using similar argument in the proof of Theorem \ref{m_a},
we may assume
$K=\coprod_j M_j$ is a countable disjoint union such that $\chi_{M_j}\in X_{0,A}$.
Set $y'_j:=\chi_{M_j}+y$.
Since both $\chi_{M_j}, y\in X_{0,A}$, so $y'_j\in X_{0,A}$.

On the other hand, $y'_j$ is distributionally unbounded w.r.t. $B$ by $y\le y'_j$.
So $y'_j\in DIV_{A,B}\subset L^p(M_{A,B})$.
But, this implies that $K=\bigcup_j M_j\subseteq\bigcup_j \{y'_j\neq 0\}\subseteq M_{A,B}$ almost everywhere,
which contradicts to the assumption that $K=M_A\setminus M_{A,B}$ is non-null.
Hence $M_A=M_{A,B}$ almost everywhere,
and
$$\overline{DIV_{A,B}}=L^p(M_{A,B})=L^p(M_{A})=\overline{X_{0,A}}.$$

Finally, since $DIV_{A,B} \subseteq DIV_{A,\bullet} \subseteq X_{0,A}$, 
$$
\overline{DIV_{A,B}}=\overline{DIV_{A,\bullet}}=L^p(M_A)=\overline{X_{0,A}}.
$$
\end{proof}


Comparing with Theorem \ref{residual}, we use a different method to obtain the following result.

\begin{corollary}
Let $B_w$ be invertible and distributionally chaotic on $\ell^p(\mathbb{Z})$. Then $DIV(B_w)$ is dense and connected in $\ell^p(\mathbb{Z})$.
\end{corollary}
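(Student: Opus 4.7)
The plan is to bootstrap from the already-established connected piece $DIV_{A,\bullet}(B_w)$ to all of $DIV(B_w)$ via a density-plus-sandwich argument, using Theorem \ref{m_ab} as the crucial bridge.

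First I would fix a distributionally irregular vector $y$ of $B_w$, which exists because $B_w$ is distributionally chaotic, and let $A,B \subseteq \mathbb{N}$ be associated sets of full upper density so that $y \in DIV_{A,B}(B_w)$. As observed inside the proof of Theorem \ref{residual}, for any $g \in \mathbb{Z}$ with $y(g)\neq 0$ the inequality $|y(g)|\chi_g \le |y|$ together with $|y|\in X_{0,A}$ forces $\chi_g \in X_{0,A}$; applying Lemma \ref{translated irregular vector} (which needs invertibility of $B_w$) and shifting by $T_{-1}^m$ then yields $\chi_g \in X_{0,A}$ for every $g \in \mathbb{Z}$. Hence $\overline{X_{0,A}} = \ell^p(\mathbb{Z})$, which in the language of Theorem \ref{m_a} says $M_A = \mathbb{Z}$ almost everywhere.

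Next, since $DIV_{A,B}$ is nonempty, Theorem \ref{m_ab} gives
$$\overline{DIV_{A,B}(B_w)} = L^p(M_A) = \ell^p(\mathbb{Z}).$$
Because $DIV_{A,B} \subseteq DIV$, the set $DIV(B_w)$ is dense in $\ell^p(\mathbb{Z})$, which handles the first half of the statement. For connectedness I would invoke the theorem immediately preceding Lemma \ref{separable}, which asserts that $DIV_{A,\bullet}(B_w) = DIV(B_w) \cap X_{0,A}(B_w)$ is path connected (the hypothesis there is $DIV_{A,B} \neq \emptyset$, which we have). Since $DIV_{A,B} \subseteq DIV_{A,\bullet}$, this path-connected set is itself dense, so
$$DIV_{A,\bullet}(B_w) \;\subseteq\; DIV(B_w) \;\subseteq\; \ell^p(\mathbb{Z}) \;=\; \overline{DIV_{A,\bullet}(B_w)}.$$
A set sandwiched between a connected set and its closure is connected, so $DIV(B_w)$ is connected.

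The only point that requires any care is the first step, verifying that $X_{0,A}$ contains every coordinate $\chi_g$; everything else is a clean assembly of already-proved results. Once that density is established, Theorem \ref{m_ab} converts the density of $X_{0,A}$ into density of $DIV_{A,B}$, and the sandwich argument upgrades the path connectedness of $DIV_{A,\bullet}$ to connectedness of the full set $DIV(B_w)$ at no extra cost.
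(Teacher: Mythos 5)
Your proposal is correct, and its skeleton matches the paper's: both halves ultimately rest on Theorem \ref{m_ab} after establishing $M_A=\mathbb{Z}$. The differences lie in which supporting results are invoked. For $M_A=\mathbb{Z}$ you re-run the computation from the proof of Theorem \ref{residual} (every $\chi_g$ lies in $X_{0,A}$ via the pointwise domination $|y(g)|\chi_g\le|y|$ and Theorem \ref{translated irregular vector}), whereas the paper instead quotes the remark following Theorem \ref{m_ab}: invertibility forces $M_Aa^n=M_A$ for all $n\in\mathbb{Z}$, and the only nonempty translation-invariant subset of $\mathbb{Z}$ is $\mathbb{Z}$ itself. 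Your route is more self-contained; the paper's is shorter but leans on the remark. For connectedness the paper applies Corollary \ref{countable_union} to the countable family whose nonzero sets exhaust $M_{A,B}$, while you combine the path-connectedness of $DIV_{A,\bullet}$ with the standard fact that a set squeezed between a connected set and its closure is connected. Both are legitimate, and indeed the proof of Corollary \ref{countable_union} tacitly uses the same sandwich principle, so your version merely makes that topological step explicit. I see no gaps.
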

\begin{proof}
Let $y\in DIV_{A,B}(B_w)$ for some $A,B$, and recall that $B_w=T_{a,w}$ when $a=-1$.
Then the only nonempty subset $M_A$ of $\mathbb{Z}$ satisfying $M_A a^n = M_A$ for all $n$ (or rewrite it in the form of the addition group, that is, $M_A -n = M_A $ for $n\in \mathbb{Z}$) is $\mathbb{Z}$ itself.
Thus, $M_A=\mathbb{Z}$. Hence, $DIV$ is dense in $\ell^p(M_A)=\ell^p(\mathbb{Z})$ by Theorem \ref{m_ab}.

On the other hand, since $M_A$ is a countable union of nonzero part of distributionally irregular vectors and $DIV=DIV\cap \ell^p(\mathbb{Z})=DIV\cap \ell^p(M_A)$, the set $DIV$ is connected by Corollary \ref{countable_union}.
\end{proof}

So far, it is clear that for some subspace $L^p(M)$, $DIV(T_{a,w})\cap L^p(M)$ is dense and connected in $L^p(M)$.
But, can we strengthen this result to the whole space $L^p(G)$?
That is, whether $DIV$ is connected or dense in $L^p(G)$.
Another question is if it is possible to show $\overline{DIV}=L^p(M)$ for some $M$.
Unfortunately, both answers are negative in general.
The following example will reveal that there exists a weighted translation
$T_{a,w}$ such that $DIV(T_{a,w})$ is neither connected nor dense in $L^p(G)$.
Moreover, this $DIV(T_{a,w})$ has exactly two connected components $DIV\cap L^p(M')$ and $DIV\cap L^p(M'')$ for some $M',M''$, and the closure of $DIV$ is $L^p(M')\cup L^p(M'')$.

\begin{example}
\label{counter_ex}
Define $B_{w'}:=T_{-1, w'}$ as the construction in the Example \ref{ex_Bw}.
Then both $B_{w'}$ and $B_{\frac{1}{w'}}$ are distributionally chaotic.

Let $G=\mathbb{Z}\times \mathbb{Z}_2$, $a=(-1,0)$ and
$$
w(n,m)=
\left\{\begin{matrix}
w'(n) &  \mbox{if}\ m=0;\\
\frac{1}{w'(n)} & \mbox{if}\ m=1.
\end{matrix}\right. 
$$
Then $T_{a,w}$ is distributionally chaotic and $DIV(T_{a,w})$ has exactly two connected components $DIV\cap L^p(\mathbb{Z}\times \{0\}),$ $DIV\cap L^p(\mathbb{Z}\times \{1\})$.
Moreover, 
$$\overline{DIV}=L^p(\mathbb{Z}\times \{0\})\cup L^p(\mathbb{Z}\times \{1\}).$$
\end{example}
\begin{proof}
Let $M':=\mathbb{Z}\times \{0\}$ and $M'':=\mathbb{Z}\times \{1\}$ and
let $\pi: \mathbb{Z}\times \mathbb{Z}_2 \to \mathbb{Z}$ be the natural projection.

Observe that
the action $B_{w'}$ on $\ell^p(\mathbb{Z})$ is isomorphic to the action $T_{a,w}|_{L^p(M')}$ on $L^p(M')$;
the action $B_{\frac{1}{w'}}$ on $\ell^p(\mathbb{Z})$ is isomorphic to the action $T_{a,w}|_{L^p(M'')}$ on $L^p(M'')$.
So $DIV\cap L^p(M')$ and $DIV\cap L^p(M'')$ are non-empty.

\noindent\textbf{Claim:}
$DIV\subseteq L^p(M')\cup L^p(M'')$.
On the other hand, if $DIV_{A,B}\neq\emptyset$,
then exactly one of the following occurs.
\begin{enumerate}
\item[{\rm(i)}]
$\overline{DIV_{A,B}} = L^p(M')$
\item[{\rm(ii)}]
$\overline{DIV_{A,B}} = L^p(M'')$.
\end{enumerate}

Let $y\in DIV$ with $A, B\subset \mathbb{N}$ as in the definition of $y$, i.e. $y\in DIV_{A,B}$.
Then for any $g\in\{y\neq 0\}$, $\chi_g \le \frac{1}{|y(g)|}|y|$. Hence $\chi_g \in X_{0,A}$.

Assume that $\{y\neq 0\}\cap M' \neq \emptyset$ and $\{y\neq 0\}\cap M'' \neq \emptyset$.
By Lemma \ref{convergent_and_divergent_for_any_subsequence}, for
$g'\in \{y\neq 0\}\cap M'$ and $g''\in \{y\neq 0\}\cap M''$, we have
$$\chi_{(0, 0)} = T_{a^{\pi(g')}}\chi_{g'} \in X_{0,A}$$
and
$$\chi_{(0, 1)} = T_{a^{\pi(g'')}}\chi_{g''} \in X_{0,A}.$$
This means that
$$\lim_{n\in A}\prod_{j=1}^{n}w'(j) = \lim_{n\in A}T_{a,w}^n\chi_{(0, 0)} = 0$$
and
$$\lim_{n\in A}(\prod_{j=1}^{n}w'(j))^{-1} = \lim_{n\in A}T_{a,w}^n\chi_{(0, 1)} = 0,$$
which is impossible.
So either $\{y\neq 0\}\cap M' = \emptyset$ or $\{y\neq 0\}\cap M'' = \emptyset$, that is
either $y\in L^p(M')$ or $y\in L^p(M'')$.
So
$$DIV\subseteq L^p(M')\cup L^p(M'').$$

Next, since $\overline{DIV_{A,B}} = L^p(M_A)\subset L^p(M')\cup L^p(M'')$,
then either
$M_A \subseteq M'$ or $M_A \subseteq M''$.

By the invertibility of $T_{a,w}$, Theorem \ref{m_a} forces either $M_A = M'$ or $M_A = M''$.

So the claim follows.

\noindent\textbf{Claim:}
$DIV\cap L^p(M')$ and $DIV\cap L^p(M'')$ are connected.

Since $DIV\cap L^p(M')$ is non-empty, let $y \in DIV\cap L^p(M')$.
Then
$M' = M_A$ for $A$ being as in the definition of $y$.
So $DIV\cap L^p(M')$ is connected by Theorem \ref{m_ab}.
Similar to the case for $DIV\cap L^p(M'')$.

Finally, the three facts
\begin{enumerate}
\item
Both $L^p(M')$ and $L^p(M'')$ are closed subspace,
\item
$L^p(M')\cap L^p(M'') = \{0\}$,
\item
$0\notin DIV$,
\end{enumerate}
implies that $DIV\cap L^p(M')$ and $DIV\cap L^p(M'')$ are both clopen in the subspace topology of $DIV$.
We conclude that
$DIV$ has exactly two connected components $DIV\cap L^p(M')$ and $DIV\cap L^p(M'')$.
\end{proof}

\noindent
\addcontentsline{toc}{section}{Acknowledgements}
\textbf{Acknowledgements.}
The authors would like to thank
Eugene Z. Xia,
Chung-Chuan Chen
and
Chun-Wei Lee
for their assistance.


\vspace{.1in}
\end{document}